\newcommand{\comment}[1]{}
\newtheorem{Th}{Theorem}[section]
\newtheorem{Prop}[Th]{Proposition}
\newtheorem{Lem}[Th]{Lemma}
\newtheorem{Corol}[Th]{Corollary}
\newtheorem{Def}[Th]{Definition}
\theoremstyle{remark}
\newtheorem{Rem}[Th]{Remark}
\newtheorem{example}[Th]{Example}
\newcommand{\pnv}{}
\newcommand{\pnuv}{}
\newcommand{\BT}{\widetilde{ST}}
\newcommand{\mm}{\Psi}
\newcommand{\ex}[1]{(\tau_{\text{ex}}^{#1},\sigma_{\text{ex}}^{#1})}
\newcommand{\xx}{\mathbf{x}}
\newcommand{\vv}{\mathbf{v}}
\newcommand{\mon}{M}
\DeclareMathOperator{\Aut}{Aut}
\begin{document}
\author[labri]{Valentin F\'eray}\ead{feray@labri.fr}

\author[lix]{Ekaterina A. Vassilieva}\ead{ekaterina.vassilieva@lix.polytechnique.fr}

\address[labri]{LaBRI, Universit\'e Bordeaux 1, 351 cours de la lib\'eration, 33 400 Talence, France}
\address[lix]{LIX, Ecole Polytechnique, 91128, Palaiseau, France}

\title{Bijective enumeration of some colored permutations given by the product of two long cycles}
\begin{abstract}
Let $\gamma_n$ be the permutation on $n$ symbols defined by $\gamma_n = (1\ 2\ \ldots\ n)$.
We are interested in an enumerative problem on colored permutations,
that is permutations $\beta$ of $n$ in which the numbers from $1$ to $n$
are colored with $p$ colors such that two elements in a same cycle have the same color.
We show that the proportion of colored permutations
such that $\gamma_n \beta^{-1}$ is a long cycle
is given by the very simple ratio $\frac{1}{n- p+1}$.
Our proof is bijective and uses combinatorial objects such as partitioned hypermaps and thorn trees.
This formula is actually equivalent to the proportionality of the number of long cycles $\alpha$ such that $\gamma_n\alpha$ has $m$ cycles and Stirling numbers of size $n+1$, an unexpected connection previously found by several authors by means of algebraic methods. Moreover, our bijection allows us to refine the latter result with the cycle type of the permutations.

\end{abstract}

\begin{keyword}
Colored Permutations, Bipartite Maps, Long Cycle Factorization
\end{keyword}
\maketitle


\section{Introduction}
The question of the number of factorizations of the long cycle $(1\ 2\ \ldots\ n)$ into two permutations with given number of cycles has already been studied via algebraic or combinatorial\footnote{It can be reformulated in terms of unicellular bipartite maps with given number of vertices, see paragraph \ref{subsect:partitioned_maps}.} methods \cite{
Adrianov1998serie_carte_bicolore, SchaefferVassilieva:factorizations_long_cycle}.
In these papers, the authors obtain nice generating series for these numbers. Note that the combinatorial approach has been refined to state a result on the number of factorizations of the long cycle $(1\ 2\ \ldots\ n)$ in two permutations with given types \cite{MoralesVassilieva:factorizations_long_cycle}.\\

Unfortunately, even though generating series have nice compact forms,
the formulae for one single coefficient are much more complicated
(see for example \cite{GoupilSchaefferGenusExpansion}).
The case where one factor has to be also a long cycle is particularly interesting.
Indeed, the number $B'(n,m)$ of permutations $\beta$ of $[n]$ with $m$ cycles, such that $(1\ 2\ \ldots\ n) \beta^{-1}$ is a long cycle, is known to be the coefficient of some linear monomial in Kerov's and Stanley's character polynomials (see \cite[Theorem 6.1]{Biane2003Kerov} and \cite{Stanley2003rectangles,Feray:stanley_formula}). These polynomials express the character value of the irreducible representation of the symmetric group indexed by a Young diagram $\lambda$ on a cycle of fixed length in terms of some coordinates of $\lambda$.\\

The numbers $B'(n,m)$ admit a very compact formula in terms of Stirling numbers.
\begin{Th}[\cite{KwakLee1993}] \label{th:stanley}
Let $m \leq n$ be two positive integers with the same parity. Then
\begin{equation}\label{eq:stanley}
 \frac{n(n+1)}{2} B'(n,m) = s(n+1,m),
\end{equation}
where $s(n+1,m)$ is the unsigned Stirling number of the first kind,
that is the number of permutations of $[n+1]$ with $m$ cycles.
\end{Th}

This formula has been found independently by several authors:
J.H. Kwak and J. Lee \cite[Theorem 3]{KwakLee1993},
then D. Zagier \cite[Application 3]{ZagierCycles}
and finally R. Stanley \cite[Corollary 3.4]{Stanley2009product_cycles}.
Very recently, a combinatorial proof of this statement has been given
by R. Cori, M. Marcus and G. Schaeffer \cite{CoriMarcusSchaeffer2010}.
This paper is focused on an equivalent statement in terms of {\it colored} (or partitioned) permutations.

\begin{Def}\label{DefColoredPerm}
A colored permutation of $n$ with $p$ colors is a couple $(\beta,\varphi)$ where:
\begin{itemize}
    \item $\beta$ is a permutation of $n$;
    \item $\varphi$ is a surjective map from $\{1,\dots,n\}$ to a set $C$ of colors of cardinality $p$.
        We require that two elements belonging to the same cycle of $\beta$ have the same color.
\end{itemize}
In what follows, we consider that two colored permutations differing only by a bijection on the set of colors are the same object.
As such, coloration can be seen as a set partition of the set of cycles of $\beta$,
or as a set partition $\pi$ of  $\{1,\dots,n\}$ coarser than the set partition into cycles of $\beta$ (in other words, if $i$ and $j$ lie in the same cycle of $\beta$, they must be in the same part of $\pi$).
The set of colored permutations of $n$ with $p$ colors is denoted $\mathcal {C}(p,n)$.
\end{Def}
According to the last remark of definition \ref{DefColoredPerm}, we rather denote colored permutations $(\beta, \pi)$
where $\pi$ is a set partition coarser than the set partition into cycles of $\beta$.\\
These objects play an important role in the combinatorial study of the factorizations in the symmetric group,
as it is much easier to find direct bijections for colored factorizations than it is for classical ones
(see \cite{GouldenNica, GouldenSlofstra, Bernardi, SchaefferVassilieva:factorizations_long_cycle,
MoralesVassilieva:factorizations_long_cycle}).
Generating series of colored and classical factorizations are linked through simple formulae
(Lemma \ref{LemLinkOGF}).

We consider here an analogue of Theorem \ref{th:stanley} for colored permutations,
that is the problem of enumerating colored permutations such that
$(1\ 2\ \ldots\ n) \beta^{-1}$ is a long cycle.
We obtain the following elegant result:

\begin{Th}\label{th:reformulation}
Let $p \leq n$ be two positive integers.
 Choose randomly (with uniform probability) a colored permutation $(\beta,\pi)$ in $\mathcal {C}(p,n)$.
 Then the probability for $(1\ 2\ \ldots\ n) \beta^{-1}$ to be a long cycle is exactly $1/(n-p+1)$.
\end{Th}
 
 Given a colored permutation $(\beta,\pi)$ in $\mathcal {C}(p,n)$,
 the (unordered) sequence of the numbers of elements having the same color
 defines an integer partition of $n$ with $p$ parts, which we call the type of $(\beta,\pi)$.
 For any $\lambda$ integer partition of $n$,
 we note $\mathcal {C}(\lambda)$ the set of all colored permutations of type $\lambda$.
 Our main result is the following refinement of Theorem \ref{th:reformulation}:
 
 \begin{Th}[Main result]\label{th:reformulationfine}
     Let $p \leq n$ be two positive integers.
     Fix an integer partition $\lambda$ of size $n$ and length $p$.
 Choose randomly (with uniform probability) a colored permutation $(\beta,\pi)$
 in $\mathcal {C}(\lambda)$.
 Then the probability for $(1\ 2\ \ldots\ n) \beta^{-1}$ to be a long cycle
 is exactly $1/(n-p+1)$.
\end{Th}

In fact, counting colored permutations and counting permutations
without additional structure are two equivalent problems.
Therefore, one can deduce from Theorem \ref{th:reformulationfine}
a refinement of Theorem \ref{th:stanley}.

To state this new theorem, we need to introduce a few notations.
Recall that the type of a permutation is defined as the sequence of the lengths of its cycles,
sorted in increasing order.
With this notion, it is natural to refine the numbers $s(n+1,m)$ and $B'(n,m)$:
if $\lambda \vdash n$ (\textit{i.e.} $\lambda$ is a partition of $n$), let $A(\lambda)$ (resp. $B(\lambda)$) be the number of permutations $\beta \in S_n$ of type $\lambda$ (resp. with the additional condition that $(1\ 2\ \ldots\ n) \beta^{-1}$ is a long cycle).
Of course, $A(\lambda)$ is given by the simple formula $|\lambda|!/z_\lambda$,
where $m_i(\lambda)$ is the number of parts $i$ in $\lambda$ and $z_\mu=\prod_i i^{m_i(\mu)} m_i(\mu)!$.

Then, as Theorem \ref{th:stanley} deals with permutations of $[n]$ and $[n+1]$, we need operators on partitions which modify their size, but not their length. If $\mu$ (resp. $\lambda$) has at least one part $i+1$ (resp. $i$), let $\mu^{\downarrow (i+1)}$ (resp. $\lambda^{\uparrow (i)}$) be the partition obtained from $\mu$ (resp. $\lambda$) by erasing a part $i+1$ (resp. $i$) and adding a part $i$ (resp. $i+1$). For instance, using exponential notations (see \cite[chapter 1, section 1]{Macdo}), $(1^2 3^1 4^2)^{\downarrow (4)}=1^2 3^2 4^1$ and $(2^2 3^2 4)^{\uparrow (2)}=2^1 3^3 4^1$.\\
\begin{Th}[Corollary]\label{th:Stanleyfine}
    Let $m \leq n$ be two positive integers with the same parity.
 For each partition $\mu \vdash n+1$ of length $m$, one has:
\begin{equation}
\frac{n+1}{2} \sum_{\lambda = \mu^{\downarrow (i+1)}, i>0}  i\ m_i(\lambda)\ B(\pnv\lambda) = A(\mu) = \frac{(n+1)!}{z_\mu}.
\end{equation}
\end{Th}
From this result, one can immediately recover Theorem \ref{th:stanley} by summing
over all partitions $\mu$ of length $m$ and size $n+1$. Indeed,
\begin{multline*}
    \sum_{\mu \vdash n+1 \atop \ell(\mu)=m} \frac{n+1}{2}
    \sum_{\lambda = \mu^{\downarrow (i+1)}, i>0}
    i\ m_i(\lambda)\ B(\pnv\lambda)
    = \frac{n+1}{2} \sum_{\lambda \vdash n \atop \ell(\lambda)=m}
    \sum_{\mu = \lambda^{\uparrow (i)}, i>0} i\ m_i(\lambda)\ B(\pnv\lambda) \\
    = \frac{n+1}{2} \sum_{\lambda \vdash n \atop \ell(\lambda)=m} B(\pnv\lambda)
    \left(\sum_{i>0} i\ m_i(\lambda)\right) 
    = \frac{n(n+1)}{2} \sum_{\lambda \vdash n \atop \ell(\lambda)=m} B(\pnv\lambda)
    = \frac{n(n+1)}{2} B'(n,m).
\end{multline*}

To be comprehensive on the subject, we mention that
G. Boccara has found an integral formula for $B(\lambda)$
(see \cite{Boccara1980produit2cycles}), but there does not seem to be any direct link with our result.
\begin{Rem}\label{rem:syst_inv}
Theorem \ref{th:Stanleyfine}, written for all $\mu \vdash n+1$, gives the collection of numbers $B(\pnv\lambda)$ as solution of a sparse triangular system. Indeed, if we endow the set of partitions of $n$ with the lexicographic order, Theorem \ref{th:Stanleyfine}, written for the partition $\mu=(\lambda_1+1, \lambda_2,\lambda_3,\dots)$, gives $B(\pnv\lambda)$ in terms of the quantities $A(\mu)$ and $B(\pnv\nu)$ with $\nu > \lambda$.\bigskip
\end{Rem}

Note that the statement of Theorem \ref{th:reformulationfine} is much nicer than Theorem \ref{th:Stanleyfine}
(in particular, the fact that the ratio depends only on $|\lambda|$ and $\ell(\lambda)$ is quite surprising).
This suggests that it is interesting to work with colored permutations rather than 
with permutations without additional structure
(as it is done in \cite{CoriMarcusSchaeffer2010} for example).
\medskip

\noindent {\em Outline of the paper.} Thanks to an interpretation of colored permutations
in terms of partitioned hypermaps (Section \ref{sect:hypermaps}),
we prove bijectively Theorem \ref{th:reformulationfine}
in Sections \ref{sect:def_psi}, \ref{sect:inj_psi} and \ref{sect:im_psi}.
Finally, in Section \ref{sect:reformulation},
we use algebraic computations in the ring of symmetric functions 
to show the equivalence with Theorem \ref{th:Stanleyfine}.


\section{Combinatorial formulation of Theorem \ref{th:reformulationfine}}
\label{sect:hypermaps}

\subsection{Black-partitioned maps}\label{subsect:partitioned_maps}
By definition, a {\em map} is a graph drawn on a two-dimensional oriented closed compact surface (up to deformation), i.e. a graph with a cyclic order on the incident edges to each vertex. 
The faces of a map are the connected components of the surface without the graph (we require that these components are isomorphic to open discs).\\

As usual \cite{JacquesHypermaps, CoriHypermaps},
a couple of permutations $(\alpha,\beta)$ in $S_n$ can be represented as a bipartite map (or hypermap)
with $n$ edges labeled with integers from $1$ to $n$.
In this identification, $\alpha(i)$ (resp. $\beta(i)$) is the edge following $i$
when turning around its white (resp. black) extremity.
 White (resp. black) vertices correspond to cycles of $\alpha$ (resp. $\beta$).
 In this setting, faces of the map correspond to cycles of the product $\alpha \beta$.
 Hence, the condition $\alpha \beta = (1\ 2\ \ldots\ n)$ (which we will assume from now on) means that the map is unicellular ({\it i.e.} has only one face) and that the positions of the labels are determined by the choice of the edge labeled by $1$ (which can be seen as a \emph{root}). In this case, the couple of permutations is entirely determined by $\beta$.\\

Therefore, if $\lambda \vdash n$, the quantity $A(\lambda)$ is the number of
 rooted unicellular maps  with black vertices' degree distribution $\lambda$
 (there are no conditions on white vertices).
The condition that the product $(1\ 2\ \ldots\ n) \beta^{-1}$ is a long cycle is equivalent
to the fact that the corresponding rooted bipartite map has only one white vertex
(we call such maps \emph{star} maps).
Thus $B(\lambda)$ is the number of star
 rooted unicellular maps  with black vertices' degree distribution $\lambda$.\\
 
As in the papers \cite{SchaefferVassilieva:factorizations_long_cycle} and \cite{MoralesVassilieva:factorizations_long_cycle}, our combinatorial construction deals with maps with additional structure:

\begin{Def}
 A black-partitioned (rooted unicellular) map is a rooted unicellular map with a set partition $\pi$ of its black vertices. We call degree of a part (block) $\pi_{i}$ of $\, \pi$ the sum of the degrees of the vertices in $\pi_{i}$. The \emph{type} of a black-partitioned map is its blocks' degree distribution.
\end{Def}

In terms of permutations, a black-partitioned map consists of a couple $(\alpha,\beta)$ in $S_n$ with the condition $\alpha \, \beta=(1\ 2\ \ldots\ n)$ and a set partition $\pi$ of $\{1,\ldots,n\}$ coarser than the set partition in orbits under the action of $\beta$. 
Note that couples $(\alpha,\beta)$ with $\alpha \, \beta=(1\ 2\ \ldots\ n)$
are in bijection with permutations $\beta$.
Therefore, a black-partitioned map is the same object as a colored permutation
(see Definition \ref{DefColoredPerm}).
The number $p$ of colors corresponds to
the number of blocks in the set partition $\pi$.

\begin{example}
\label{exspm}
 Let $\beta = (1)(2 5)(3 7)(4)(6)$, $\alpha = (1 2 3 4 5 6 7) \beta^{-1} = (1 2 6 7 4 5 3)$,  and $\pi$ be the partition $\left\{ \{1, 3, 6, 7\};\{2, 5\};\{4\}\right\}$.
 Here, the type of $(\beta,\pi)$ is $(4,2,1)$.
Associating the triangle, circle and square shape to the blocks, $(\beta, \pi)$ is the black-partitioned star map pictured on figure \ref{spm}.
\begin{figure}[h]
\label{spm}
\begin{center}
\includegraphics[width=50mm]{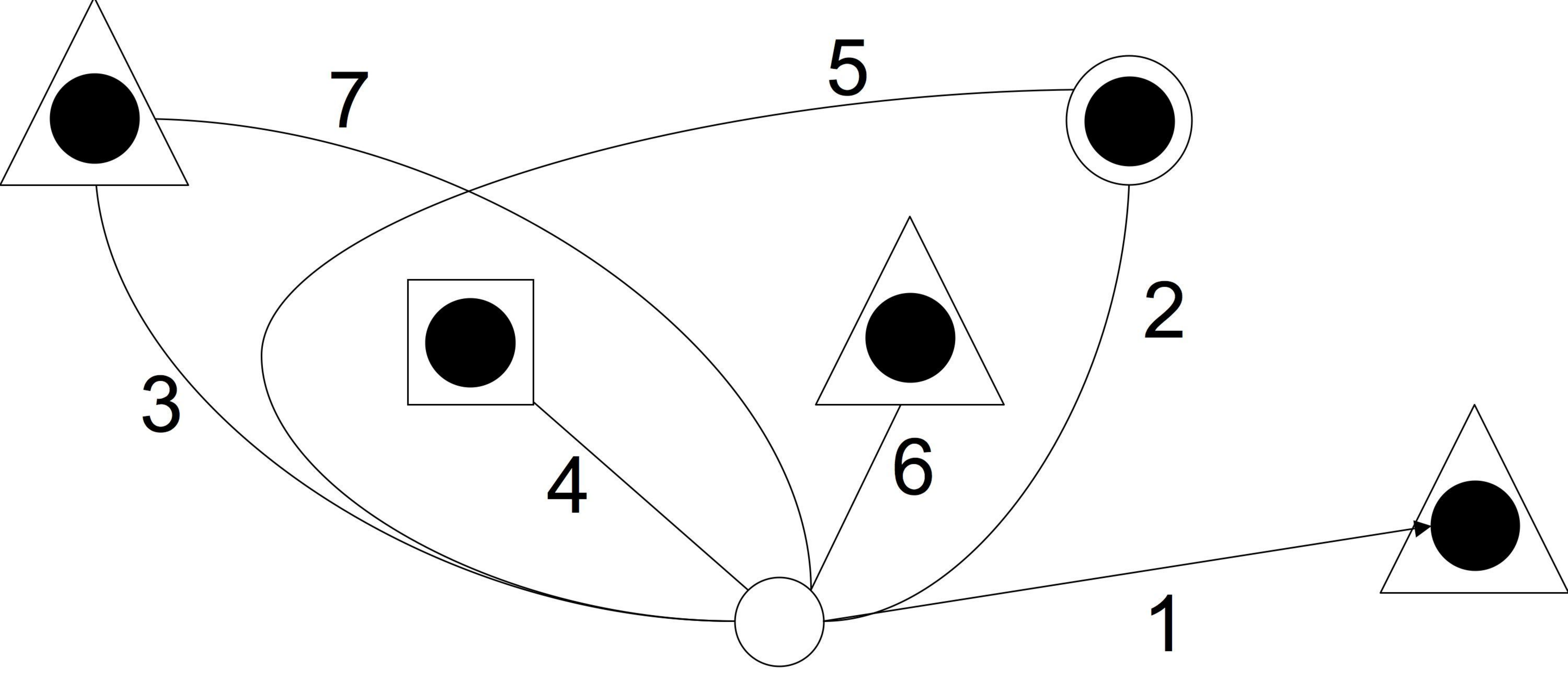}
\end{center}
\caption{The black-partitioned map defined in example \ref{exspm}} 
\end{figure}
\end{example}

If $\lambda \vdash n$,
we denote by $C(\lambda)$ (resp. $D(\lambda)$) the number of black-partitioned maps (resp. black-partitioned star maps) of type $\lambda$. Equivalently, $C(\lambda)$ (resp. $D(\lambda)$) is the number of couples $(\beta,\pi)$ as above such that $\pi$ is a partition of type $\lambda$ (resp. and $(1\ 2\ \ldots\ n) \beta^{-1}$ is a long cycle). 

With this notations, Theorem \ref{th:reformulationfine} can be rewritten as:
\begin{equation}\label{EqCD}
        D(\pnv\lambda) = \frac{1}{n-\ell(\lambda)+1} C(\pnv\lambda) \text{ ,  } \lambda \vdash n 
\end{equation}

\subsection{Permuted star thorn trees and Morales'-Vassilieva's bijection}\label{subsect:thorn_trees}

The main tool of this article is to encode black-partitioned maps into star thorn trees, which have a very simple combinatorial structure. Note that they are a particular case of the notion of thorn trees, introduced by A. Morales and the second author in \cite{MoralesVassilieva:factorizations_long_cycle}.

\begin{Def}[star thorn tree]
An \emph{(ordered rooted bipartite) star thorn tree} of size $n$ is a planar tree
with a white root vertex, $p$ black vertices and
$n - p$ thorns connected to the white vertex and $n - p$ thorns connected to the black vertices.
A thorn is an edge connected to only one vertex.
``Planar`` means that the sons of a given vertex are ordered
(here, a thorn should be considered as a son of its extremity).

We call \emph{type} of a star thorn tree its black vertices' degree distribution (taking the thorns into account).
If $\mu$ is an integer partition, we denote by $\BT(\mu)$ the number of star thorn trees of type $\mu$.
\end{Def}

Two examples are given on Figure \ref{fig:ex_arbre_permute}
(for the moment, please do not pay attention to the labels).
The interest of this object lies in the following theorem.

\begin{Th}[\cite{MoralesVassilieva:factorizations_long_cycle}]\label{th:bij_MV}
Let $\mu \vdash n$ be a partition of length $p$. One has:
\begin{equation}\label{eq:bij_MV}
C(\mu) = (n-p)! \cdot \BT(\mu).
\end{equation}
\end{Th}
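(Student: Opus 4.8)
The cleanest route is to interpose an intermediate object, splitting identity \eqref{eq:bij_MV} into one purely enumerative step and one genuinely bijective step. I would call a \emph{permuted star thorn tree} of type $\mu$ a star thorn tree of type $\mu$ equipped with a bijection between its $N-p$ white thorns and its $N-p$ black thorns (a ``gluing''). A star thorn tree of type $\mu$ and length $p$ carries, by definition, exactly $N-p$ ordered white thorns, and, since each black vertex is joined to the white root by a single genuine edge, exactly $\sum_i (\mu_i - 1) = N-p$ black thorns; as such trees are rigid (rooted and ordered, hence without nontrivial automorphism), each admits exactly $(N-p)!$ gluings. Thus the number of permuted star thorn trees of type $\mu$ equals $(N-p)! \cdot \BT(\mu)$, and it suffices to produce a bijection $\Phi$ between black-partitioned maps of type $\mu$ and permuted star thorn trees of type $\mu$.

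To build $\Phi$, I would exploit unicellularity: as noted above, $\alpha\beta = (1\,2\,\ldots\,N)$ forces the unique face to read the labels $1,2,\ldots,N$ in cyclic order, rooted at the edge $1$, so I place $N$ half-edges around the single white vertex of the tree in this linear order. Within each block $b$ of $\pi$ (of degree $\mu_i$) I distinguish the edge carrying the minimal label and declare it to be the genuine edge joining $b$ --- now a single black vertex of the tree --- to the white root; the remaining $\mu_i - 1$ labels of $b$ become black thorns, while the corresponding positions around the white vertex become white thorns in their induced order. This produces a star thorn tree of type $\mu$. The gluing is then defined by sending, for each non-minimal label $j$ lying in block $b$, the white thorn at position $j$ to the black thorn of vertex $b$ carrying that same label, the cyclic arrangement of $\beta$ inside $b$ being recorded by the order of these thorns around the black vertex.

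The inverse reverses this: relabelling the white positions $1,\ldots,N$ recovers the block minima and block degrees; the gluing reassigns each white thorn to a block, reconstructing $\pi$, while the cyclic order of black thorns together with the distinguished genuine edge rebuilds each cycle of $\beta$, hence $\alpha=(1\,2\,\ldots\,N)\beta^{-1}$. The hard part --- the real content of the argument --- is to verify that these two constructions are mutually inverse and, crucially, that re-gluing an \emph{arbitrary} permuted star thorn tree always yields a genuinely unicellular map whose induced block partition has type $\mu$: one must check that reading the glued structure as a map produces a single face traversing $1,\ldots,N$ in order, which amounts to showing that the merging of the (possibly several) white vertices into the single white root and the simultaneous splitting of each block into its $\beta$-cycles are faithfully encoded by the gluing permutation. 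This is precisely the tour-based verification carried out in \cite[Theorem 2]{MoralesVassilieva:factorizations_long_cycle}, specialized to $\lambda=(N)$; I would either reproduce that argument or invoke it directly.
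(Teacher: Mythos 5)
Your reduction of \eqref{eq:bij_MV} to counting permuted star thorn trees is correct, and your closing fallback --- invoking \cite[Theorem 2]{MoralesVassilieva:factorizations_long_cycle} in the case $\lambda=(N)$ --- is in fact the \emph{entirety} of the paper's own proof: the paper offers no independent argument for Theorem \ref{th:bij_MV} beyond observing that it is that special case. If you invoke MV directly, you are done, and done exactly as the paper is.

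The construction you actually sketch, however, is flawed, and MV's tour-based verification cannot be transplanted onto it because it is not injective. Choosing as genuine edge the one \emph{carrying the block-minimal label} forces the edge's position, so within your frame (label $j$ at position $j$ around the root, $\sigma$ matching equal labels) a block $b$ of size $k$ contributes at most $(k-1)!$ distinct images --- the orderings of its $k-1$ black thorns --- while $\beta$ restricted to $b$ ranges over $k!$ permutations. Concretely, take $N=2$, $\mu=(2)$: the two black-partitioned maps $\bigl(\beta=(1\,2),\,\pi=\{\{1,2\}\}\bigr)$ and $\bigl(\beta=(1)(2),\,\pi=\{\{1,2\}\}\bigr)$ both yield the tree with genuine edge at position $1$ and the unique gluing, yet $C((2))=2=(N-p)!\cdot\BT((2))$ requires both trees (edge at position $1$, edge at position $2$) to be hit; by the same token the tree with the edge at position $2$ can have no preimage under your rules, since its forced relabeling would put the non-minimal label on the genuine edge. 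The missing idea, present both in MV and in the paper's map $\mm$ of Section \ref{sect:def_psi}, is that the genuine edge must carry the label $\beta(m_b)$ where $m_b$ is the \emph{maximum} of the block: as $\beta$ varies, this label sweeps through all of $b$, restoring the factor $k$ per block and making the counts match. Relatedly, your phrase ``the cyclic arrangement of $\beta$ inside $b$'' tacitly treats $\beta|_b$ as a single cycle; a block may contain several cycles of $\beta$, so one also needs the explicit convention (cycles sorted by decreasing maxima, decoded from the thorn word via its left-to-right maxima, cf.\ Remark \ref{rem:betai}) for the thorn order to be unambiguously decodable.
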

This theorem corresponds to the case $\lambda=(n)$ of
\cite[Theorem 2]{MoralesVassilieva:factorizations_long_cycle}
(note that the proof is entirely bijective).

The right-hand side of \eqref{eq:bij_MV} is the number of couples $(\tau,\sigma)$ where:
\begin{itemize}
 \item $\tau$ is a star thorn tree of type $\mu$.
 \item $\sigma$ is a bijection between thorns with a white extremity and thorns with a black extremity (by definition, $\tau$ has exactly $n-p$ thorns of white extremity and $n-p$ thorns of black extremity).
\end{itemize}
We call such a couple a permuted (star) thorn tree. By definition, the type of $(\tau,\sigma)$ is the type of $\tau$. Examples of graphical representations are given on Figure \ref{fig:ex_arbre_permute}: we put symbols on edges and thorns with the following rule.
Two thorns get the same symbol if they are associated by $\sigma$ and,
except from that rule, all symbols are different (the chosen symbols and their order do not matter, we call that a symbolic labeling).\\

\begin{figure}[ht]
$$\includegraphics[width=30mm]{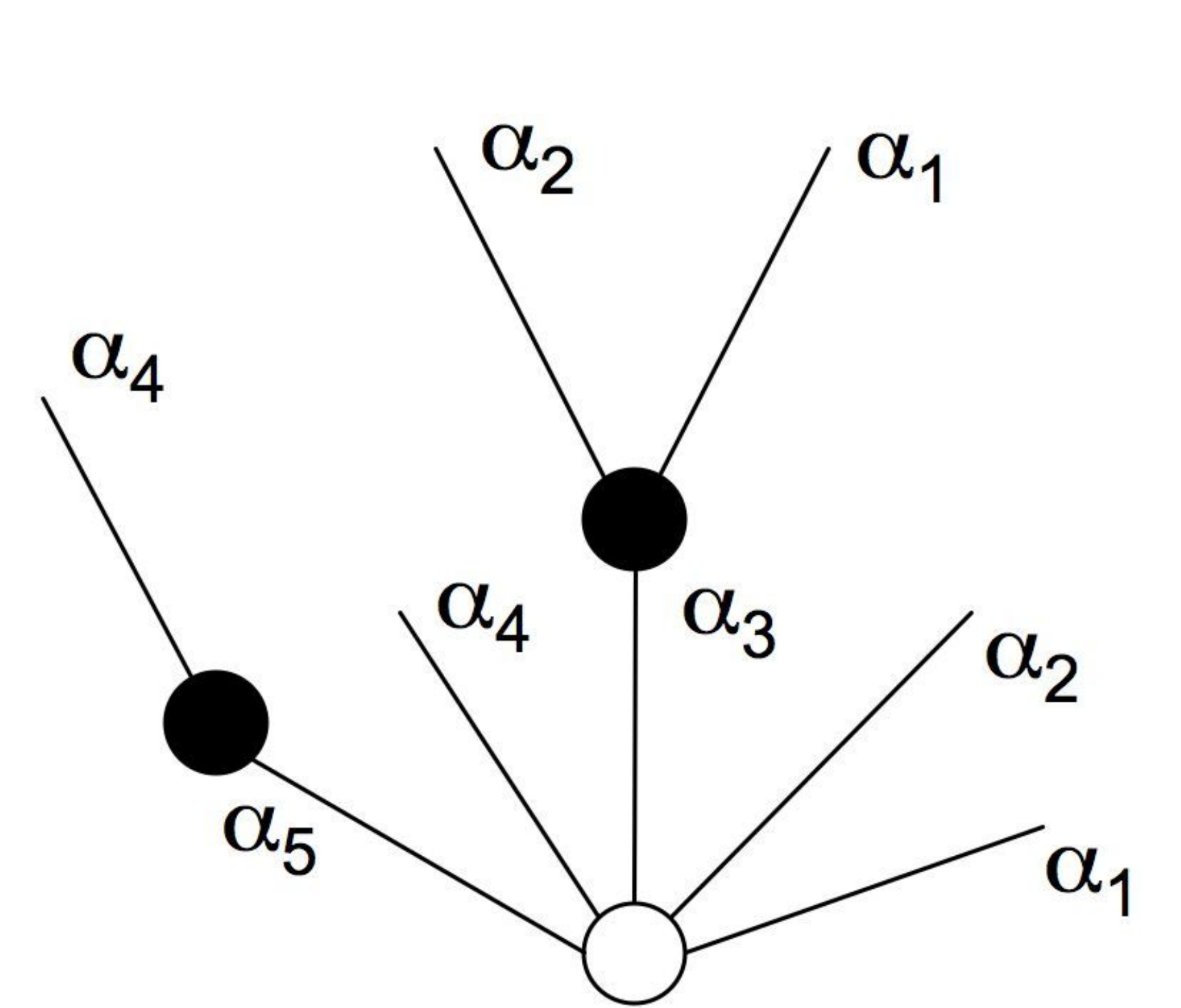} \hspace{2cm}
\includegraphics[width=3cm]{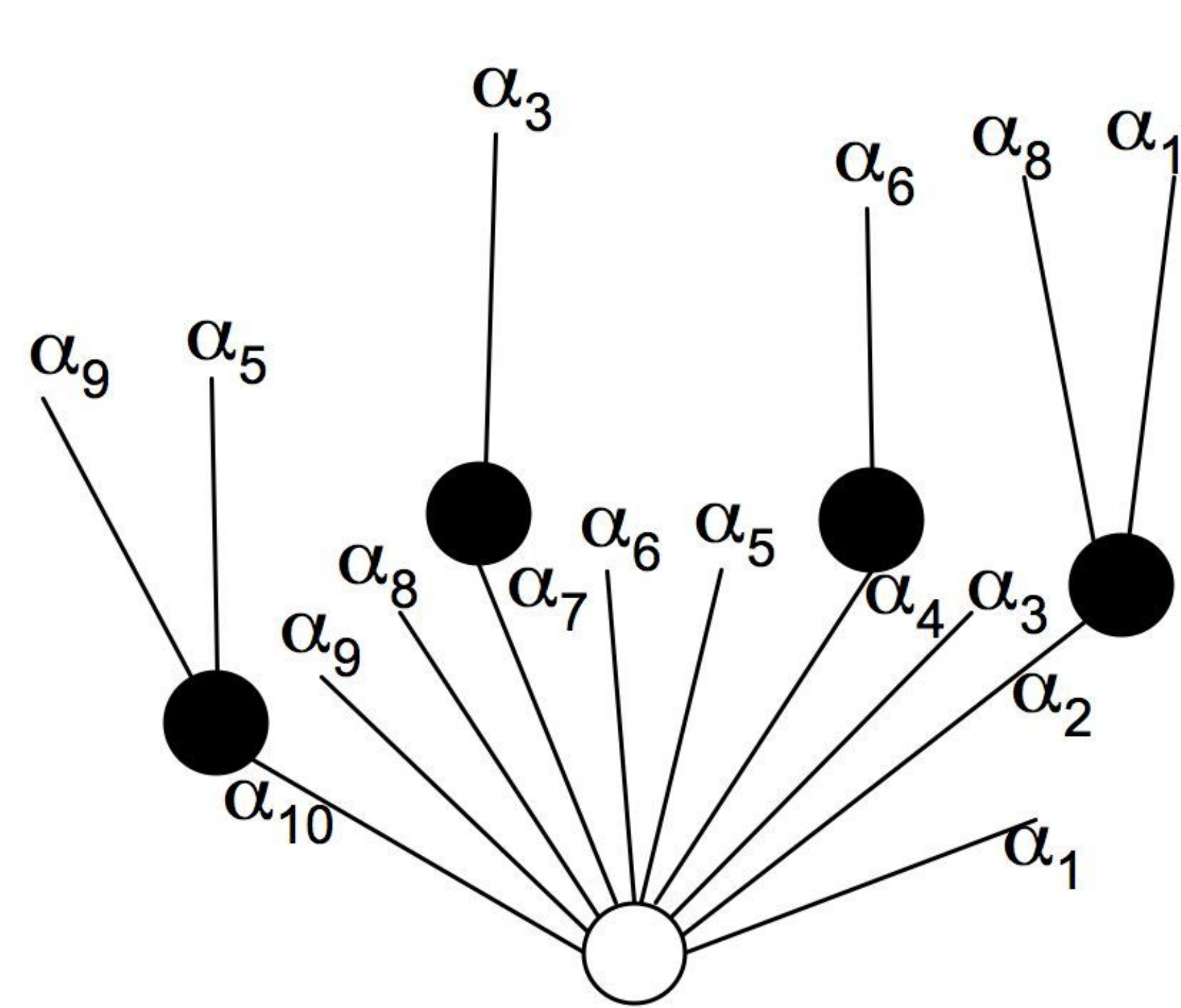}$$
\caption{Example of two permuted star thorn trees $\ex{1}$ of type $2^1 3^1$ and $\ex{2}$ of type $2^2 3^2$}
\label{fig:ex_arbre_permute}
\end{figure}

Using this result, one obtains another equivalent formulation for Theorem \ref{th:reformulationfine}:
\begin{equation}\label{EqDBT}  
    D(\pnv\lambda) = \frac{1}{n-p+1} (n-p)! \BT(\pnv\lambda)  \text{ , } \lambda \vdash n.
\end{equation}

Sections \ref{sect:def_psi}, \ref{sect:inj_psi} and \ref{sect:im_psi} are devoted to the proof of equation \eqref{EqDBT}. We proceed in a three step fashion. Firstly, we define a mapping $\Psi$ from the set of black-partitioned star maps of type $\lambda$ (counted by $D(\lambda)$) into the set of permuted star thorn trees of the same type. Secondly, we show it is injective. As a final step, we compute the cardinality of the image set of $\Psi$ and show it is exactly $\left (1/(n-p+1)\right)(n-p)! \BT(\pnv\lambda)$.\\

\noindent {\em Remark.} Although there are some related ideas, $\Psi$ is not the restriction of the bijection of paper \cite{MoralesVassilieva:factorizations_long_cycle}. 

\section{Mapping black-partitioned star maps to permuted thorn trees}\label{sect:def_psi}

\subsection{Labeled thorn tree}
Let $(\beta,\pi)$ be a black-partitioned star map. First we construct a labeled star thorn tree $\overline{\tau}$:
\begin{enumerate}[(i)]

\item \label{item:root_place} Let $(\alpha_k)_{(1\leq k \leq n)}$ be the integer list such that $\alpha_1 = 1$ and such that the long cycle $\alpha=(1\ 2\ \ldots\ n) \beta^{-1}$ is equal to $(\alpha_1 \alpha_2 \alpha_3 \dots \alpha_n)$. The root of $\overline{\tau}$ is a white vertex with $n$ descending edges labeled from right to left with $\alpha_1,  \alpha_2,  \alpha_3,  \dots, \alpha_n$ ($\alpha_1$ is the rightmost descending edge and $\alpha_n$ the leftmost). 

\item \label{item:edge_root} Let $m_i$ be the maximum element of the block $\pi_i$. For $k=1\ldots n$, if $\alpha_k = \beta(m_i)$ for some $i$, we draw a black vertex at the other end of the descending edge labeled with $\alpha_k$. Otherwise the descending edge is a thorn.
\begin{Rem}\label{rem:leftmost_edge}
As $\alpha_n=\alpha^{-1}(1) = \beta(n)$ the leftmost descending edge is never a thorn
and is labeled with $\beta(n)$.
\end{Rem}

\item \label{item:black_thorns} For $i \in \{1,\dots,p\}$, let ${(\beta^{u}_{1} \ldots \beta^{u}_{l_u})}_{1\leq u \leq c}$ be the $c$ cycles included in block $\pi_i$ such that $\beta^{u}_{l_u}$ is the maximum element of cycle $u$. (We have $\Sigma_{u} l_u = \mid\pi_i\mid$). We also order these cycles according to their maximum, \textit{i.e.} we assume that $ \beta^{c}_{l_c} < \beta^{c-1}_{l_{c-1}} < \ldots < \beta^{1}_{l_1} = m_i$. As a direct consequence, $\beta^{1}_1 = \beta(m_i)$.\\
We connect $\mid\pi_i\mid-1$ thorns to the black vertex linked to the root by the edge $\beta(m_i)$. Moving around the vertex clockwise and starting right after edge $\beta(m_i)$, we label its thorns with the integers 
\[\beta^{c}_{l_c}, \dots, \beta^{c}_{1}, \dots, \beta^{2}_{l_2}, \dots, \beta^{2}_{1}, \beta^{1}_{l_1},  \ldots, \beta^{1}_{2}\]
in this order. Note that the last one is $\beta^{1}_{2}$ as $\beta^{1}_1 = \beta(m_i)$ is the label of the edge. Then $\overline{\tau}$ is the resulting thorn tree.

\begin{Rem}\label{rem:betai}
Moving around a black vertex clockwise starting with the thorn right after the edge, a new cycle of $\beta$ begins whenever we meet a left-to-right maximum of the labels. 
\end{Rem}

\end{enumerate}

The idea behind this construction is to add a root to the map $(\alpha,\beta)$, select one edge per block, cut all other edges into two thorns and merge the vertices corresponding to the same black block together. Step (\ref{item:root_place}) tells us where to place the root, step (\ref{item:edge_root}) which edges we select and step (\ref{item:black_thorns}) how to merge vertices (in maps unlike in graphs, one has several ways to merge given vertices).

\begin{example}
\label{exspm2lstt}
Let us take the black-partitioned star map of example \ref{exspm}. Following construction rules (\ref{item:root_place}) and (\ref{item:edge_root}), one has $m_\triangle = 7$, $m_\bigcirc = 5$, $m_\Box = 4$ and the descending edges indexed by $\beta(m_\triangle) = 3$, $\beta(m_\bigcirc) = 2$ and $\beta(m_\Box) = 4$ connect a black vertex to the white root. Other descending edges from the root are thorns. Using (\ref{item:black_thorns}), we add labeled thorns to the black vertices to get the labeled thorn tree depicted on Figure \ref{taubar}. Focusing on the one connected to the root through the edge $3$, we have $(\beta^1_1 \beta^1_{2}) (\beta^2_{1}) (\beta^3_{1}) = (3 7)(6)(1)$. Reading the labels clockwise around this vertex, we get $1,6,7,3$. The three cycles can be simply recovered looking at the left-to-right maxima $1$, $6$ and $7$.

\begin{figure}[ht]
\begin{center}
\includegraphics[width=120mm]{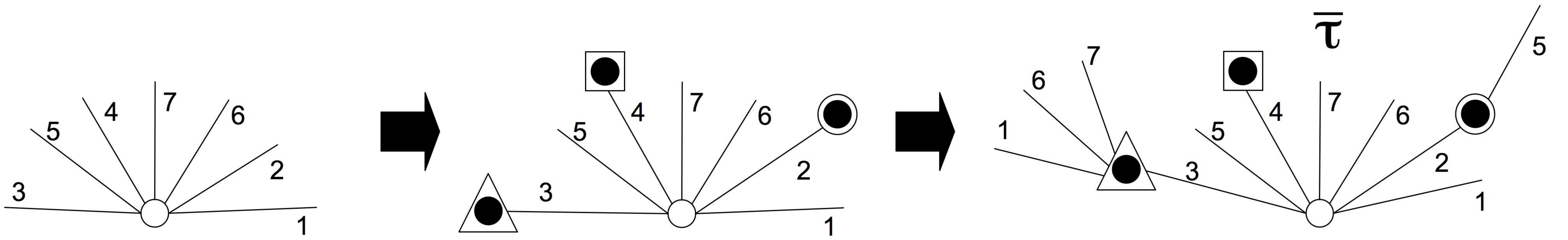} 
\end{center}
\caption{Labeled thorn tree associated to the black-partitioned star map of Figure \ref{spm}} \label{taubar}
\end{figure}
\end{example}

\begin{Rem}\label{rem:taubar2smp}
Let us fix a labeled thorn tree $\overline{\tau}$ coming from a black-partitioned star map $(\beta,\pi)$.
Then $\alpha = (1\ 2\ \dots\ n) \beta^{-1}$ can be found from $\overline{\tau}$ by reading the labels around the root in counter-clockwise order and $\pi$ is the following set-partition: for each black vertex $b$ of  $\overline{\tau}$, the block $\pi_b$ of $\pi$ is the set of the labels of the edge and of the thorns linked to $b$.
Hence, a labeled thorn tree $\overline{\tau}$ corresponds at most to one black-partitioned star map $(\beta,\pi)$.
\end{Rem}

\subsection{Permuted thorn tree}

We call $\tau$ the star thorn tree obtained from $\overline{\tau}$ by removing labels and $\sigma$ the permutation that associates to a white thorn in $\tau$ the black thorn with the same label in $\overline{\tau}$.

Finally, we define:
$
\Psi(\beta,\pi) = (\tau,\sigma).
$
\begin{example}
Following up with example \ref{spm}, we get the permuted thorn tree $\ex{3}$ drawn on Figure \ref{tausigma}. Graphically we use the same convention as in paragraph \ref{subsect:thorn_trees} to represent $\sigma$.
 \begin{figure}[h]
 \begin{center}
 \includegraphics[width=100mm]{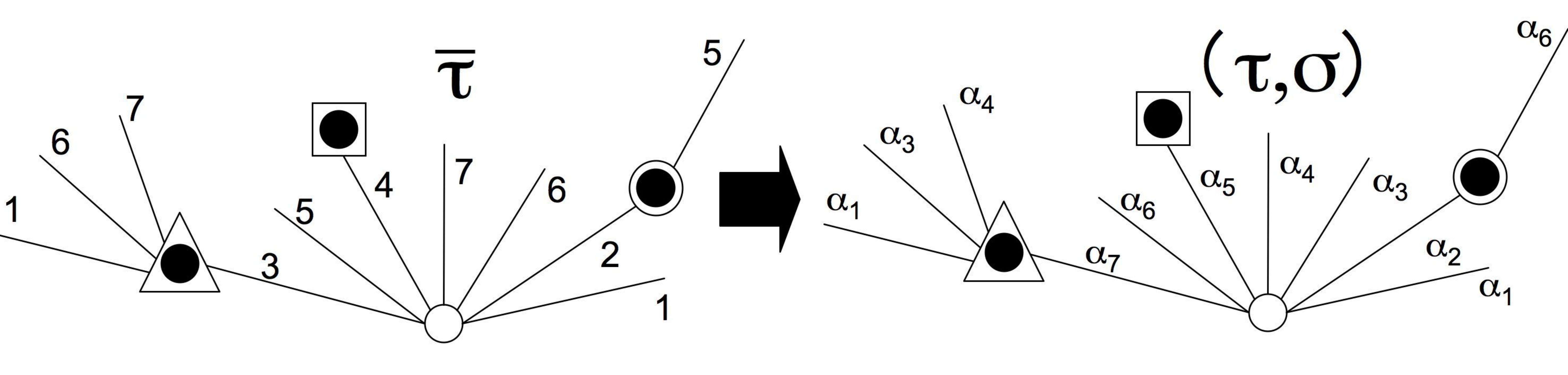}
 \end{center}
 \caption{Permuted thorn tree $\ex{3}$ associated to the black-partitioned star map of Figure \ref{spm}} \label{tausigma}
 \end{figure}
\end{example}


\section{Injectivity and reverse mapping}\label{sect:inj_psi}
Assume $(\tau,\sigma) = \Psi(\beta,\pi)$ for some black partitioned star map $(\beta,\pi)$. We show that $(\beta,\pi)$ is actually uniquely determined by $(\tau,\sigma)$.\\
As a first step, we recover the labeled thorn tree $\overline{\tau}$. Let us draw the permuted thorn tree $(\tau,\sigma)$ as explained in paragraph \ref{subsect:thorn_trees}. We show by induction that there is at most one possible integer value for each symbolic label.

\begin{enumerate}[(i)]

\item By construction, the label $\alpha_1$ of the right-most edge or thorn descending from the root is necessarily $1$.

\item \label{item:etape_determiner_bi} Assume that for $i \in [n-1]$, we have identified the symbols of values $1,2,\ldots,i$. We look at the edge or thorn with label $i$ connected to a black vertex $b$. In this step, we determine which symbol corresponds to $\beta(i)$.\\
    Recall that, when we move around $b$ clockwise finishing with the edge (in this step, we will always turn in this sense), a new cycle begins whenever we meet a left-to-right maximum (Remark \ref{rem:betai}). So, to find $\beta(i)$, one has to know whether $i$ is a left-to-right maximum or not.\\
If all values of symbols of thorns before $i$ have not already been retrieved, then $i$ is not a left-to-right maximum. Indeed, the remaining label values are $i+1$, \ldots, $n$ and at least one thorn's label on the left of $i$ lies in this interval. According to our construction  $\beta(i)$ necessarily corresponds to the symbolic label of the thorn right at the left of $i$ (case {\it a})\\
If all the symbol values of thorns before $i$ have already been retrieved (or there are no thorns at all), then $i$ is a left-to-right maximum. According to the construction of $\overline{\tau}$, $\beta(i)$ corresponds necessarily to the symbolic label of the thorn preceding the next left-to-right maximum
. But one can determine which thorn (or edge) corresponds to the next left-to-right maximum: it is the first thorn (or edge) $e$ whose value has not been retrieved so far (again moving around the black vertex from left to right). Indeed, all the values retrieved so far are less than $i$ and those not retrieved greater than $i$. Therefore $\beta(i)$ is the thorn right at the left of $e$ (case {\it b}).

If all the values of the labels of the thorns connected to $b$ have already been retrieved then $i$ is the maximum element of the corresponding block and $\beta(i)$ corresponds to the symbolic label of the edge connecting this black vertex to the root (we can see this as a special case of case {\it b}).

\item \label{item:etape_determiner_ipu} Consider the element (thorn of edge) of white extremity with the symbolic label corresponding to $\beta(i)$. The next element (turning around the root in counter-clockwise order) has necessarily label $\alpha(\beta(i))=i+1$.\\
As a result, the knowledge of the thorn or edge with label $i$ uniquely determines the edge or thorn with label $i+1$.
\end{enumerate}

Applying the previous procedure up to $i=n-1$ we see that $\overline{\tau}$ is uniquely determined by $(\tau,\sigma)$ and so is $(\beta, \pi)$ (see Remark \ref{rem:taubar2smp}).

\begin{example}
Take as an example the permuted thorn tree $\ex{1}$ drawn on the left-hand side of Figure \ref{fig:ex_arbre_permute}, the procedure goes as described on Figure \ref{recons}. First, we identify $\alpha_1 = 1$. Then, as there is a non (value) labeled thorn $\alpha_2$ on the left of the thorn connected to a black vertex with label value $1$, necessarily $1$ is not a left-to-right maximum and $\alpha_2$ is the label of the thorn immediately to the left of $1$. Then as $\alpha_3$ follows $\alpha_2 = \beta(1)$ around the white root, we have $\alpha_3 = \alpha(\beta(1))=2$.


\noindent We apply the procedure up to the full retrieval of the edges' and thorns' labels. We find $\alpha_2 = 3$, $\alpha_4 = 4$, $\alpha_5 = 5$. Finally, we have $\alpha = (1 3 2 4 5)$, $\beta = (213)(4)(5)$, $\pi = \{\{1, 2, 3\}; \{4, 5\} \}$ as shown on figure \ref{endrecons}.

\begin{figure}[ht]
\begin{center}
\includegraphics[width=30mm]{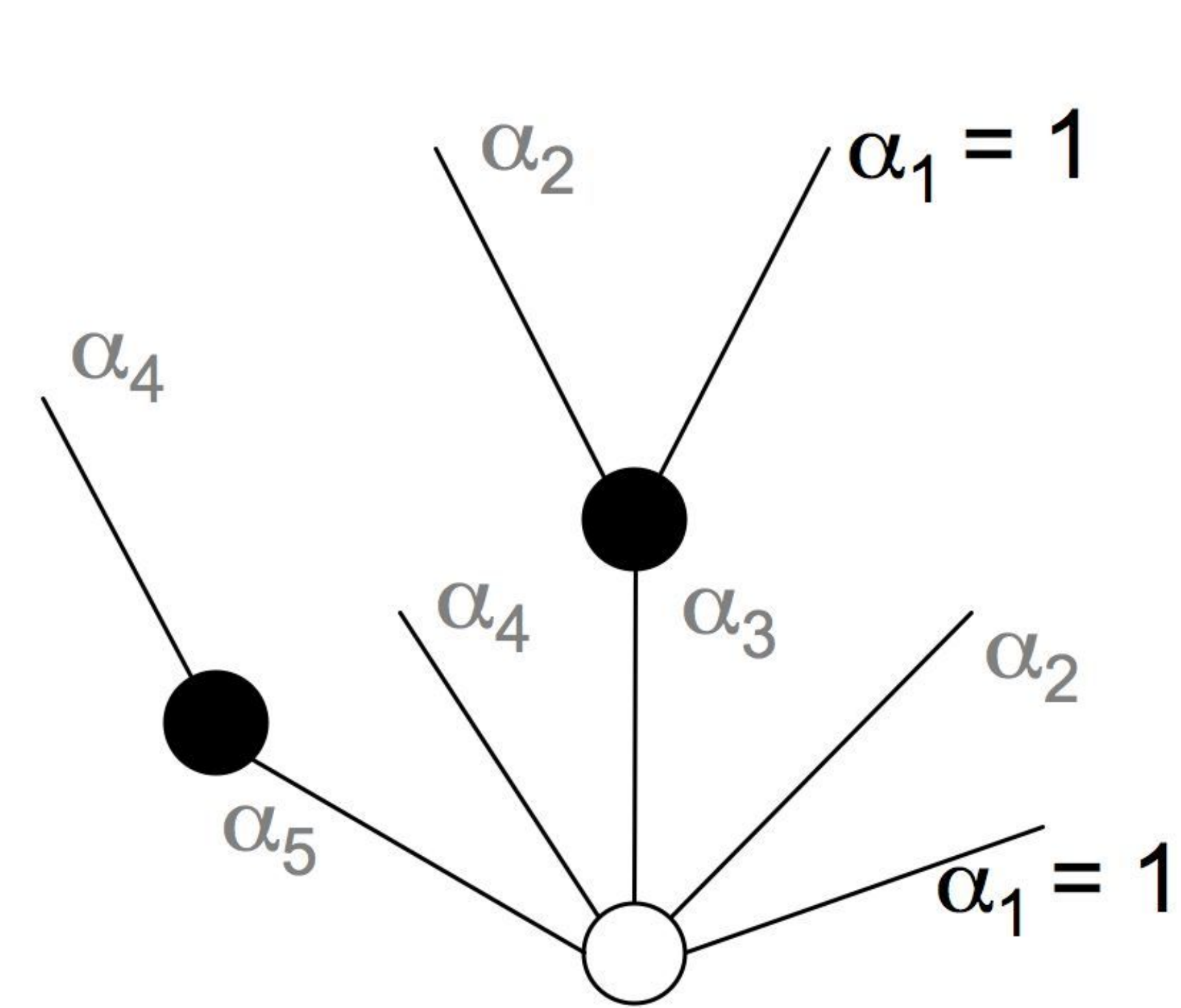}\hspace{10mm}
\includegraphics[width=30mm]{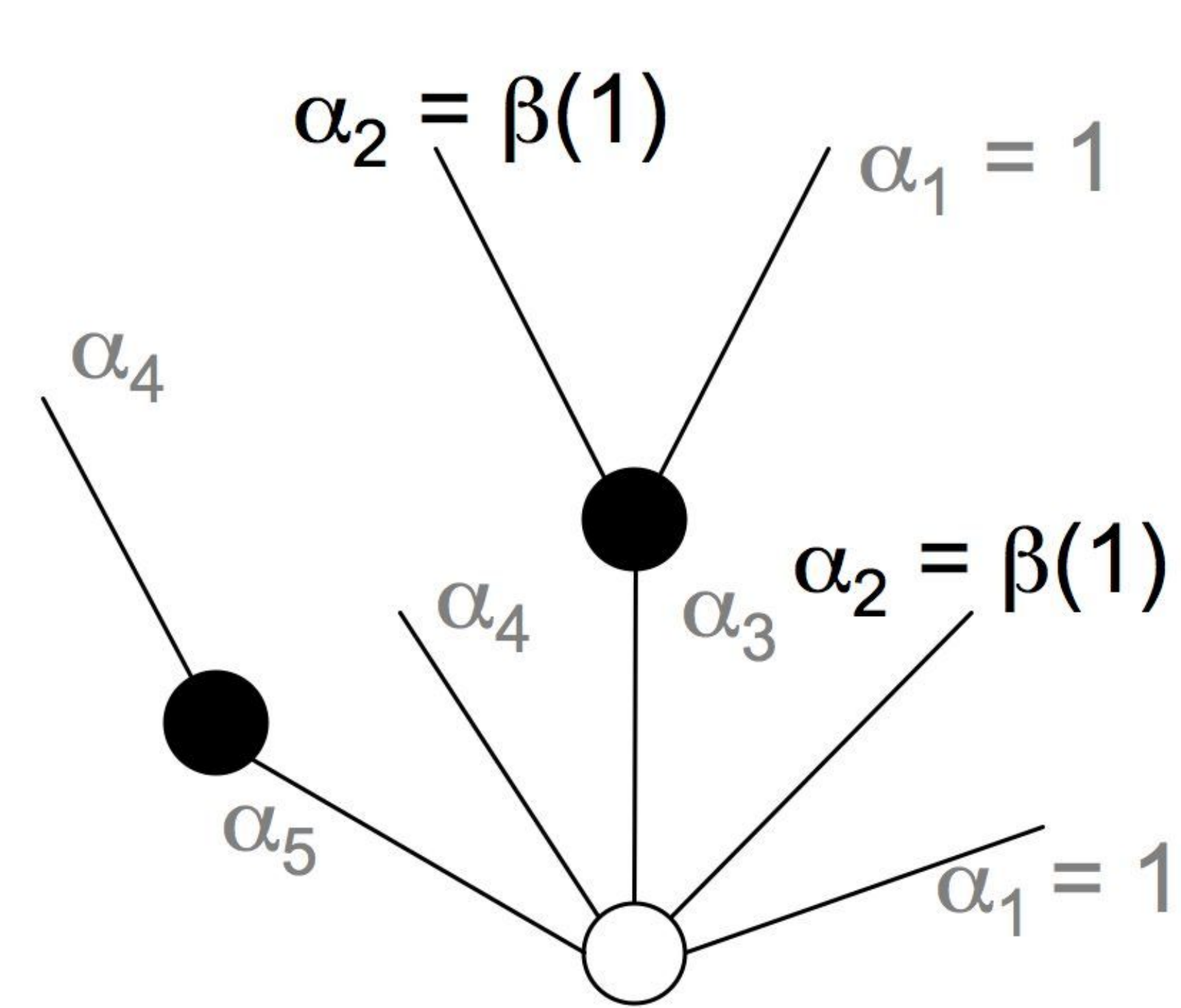}\hspace{10mm}
\includegraphics[width=30mm]{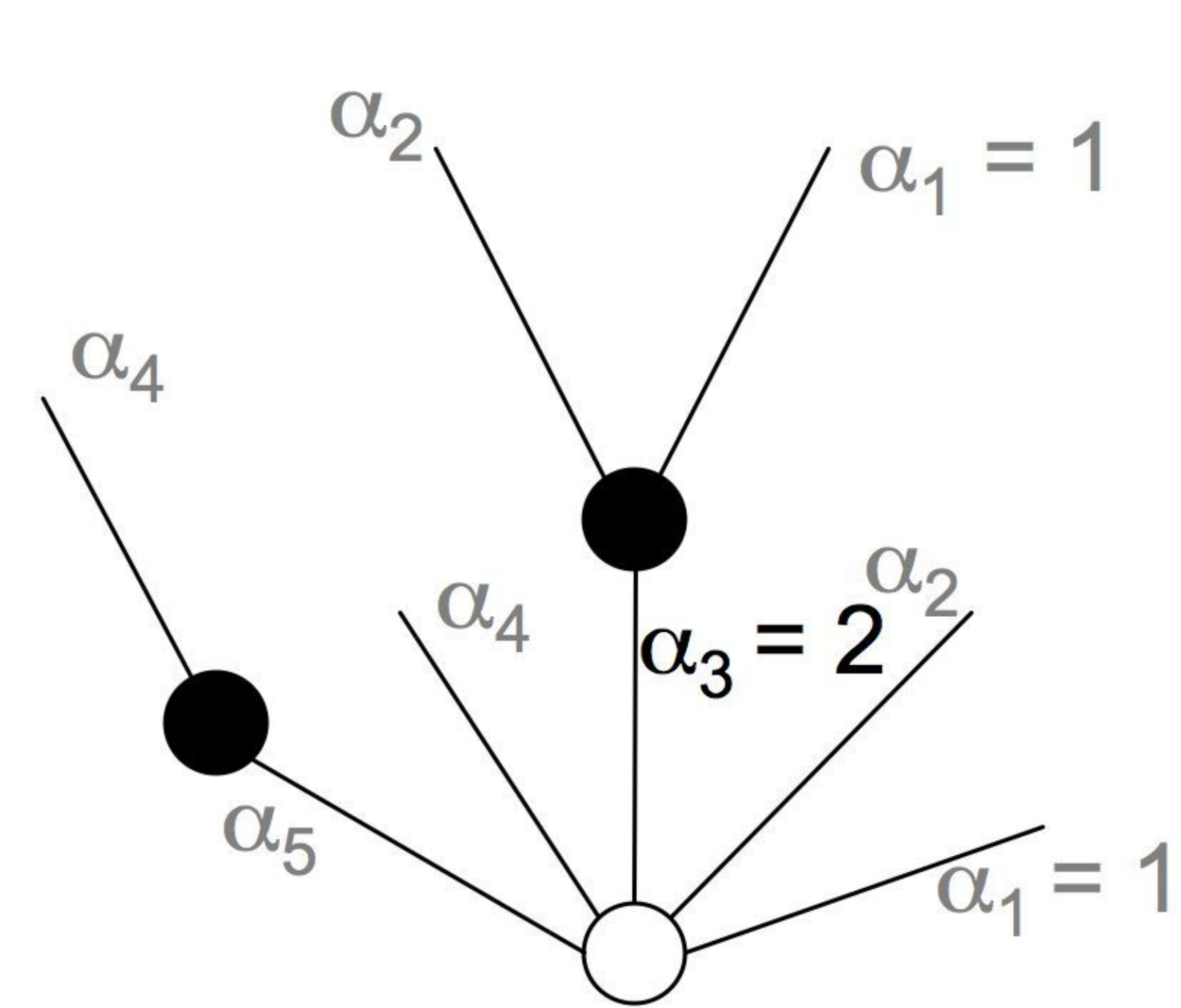}
\end{center}
\begin{center}
\includegraphics[width=30mm]{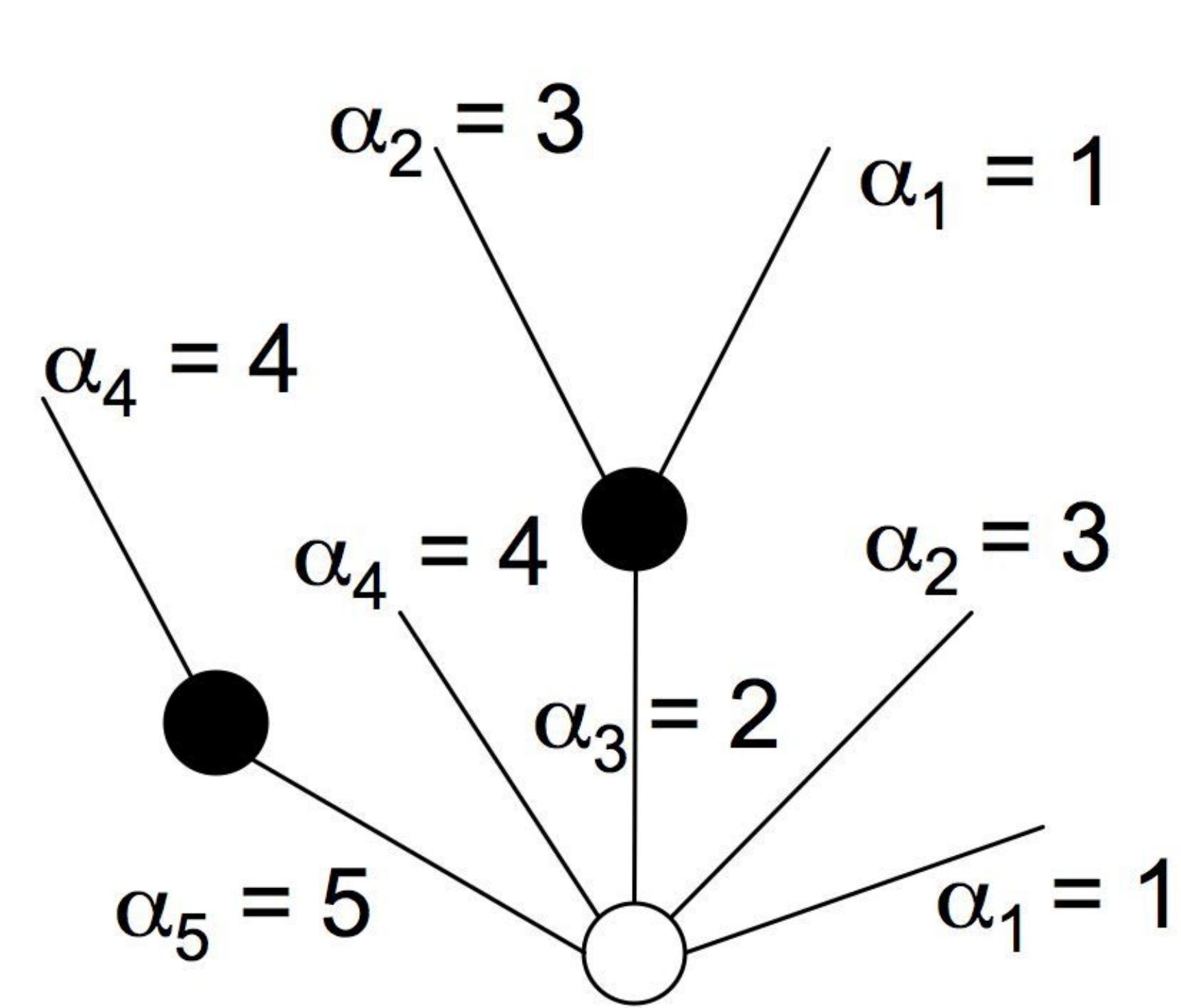}\
\hspace{15mm}
\includegraphics[width=30mm]{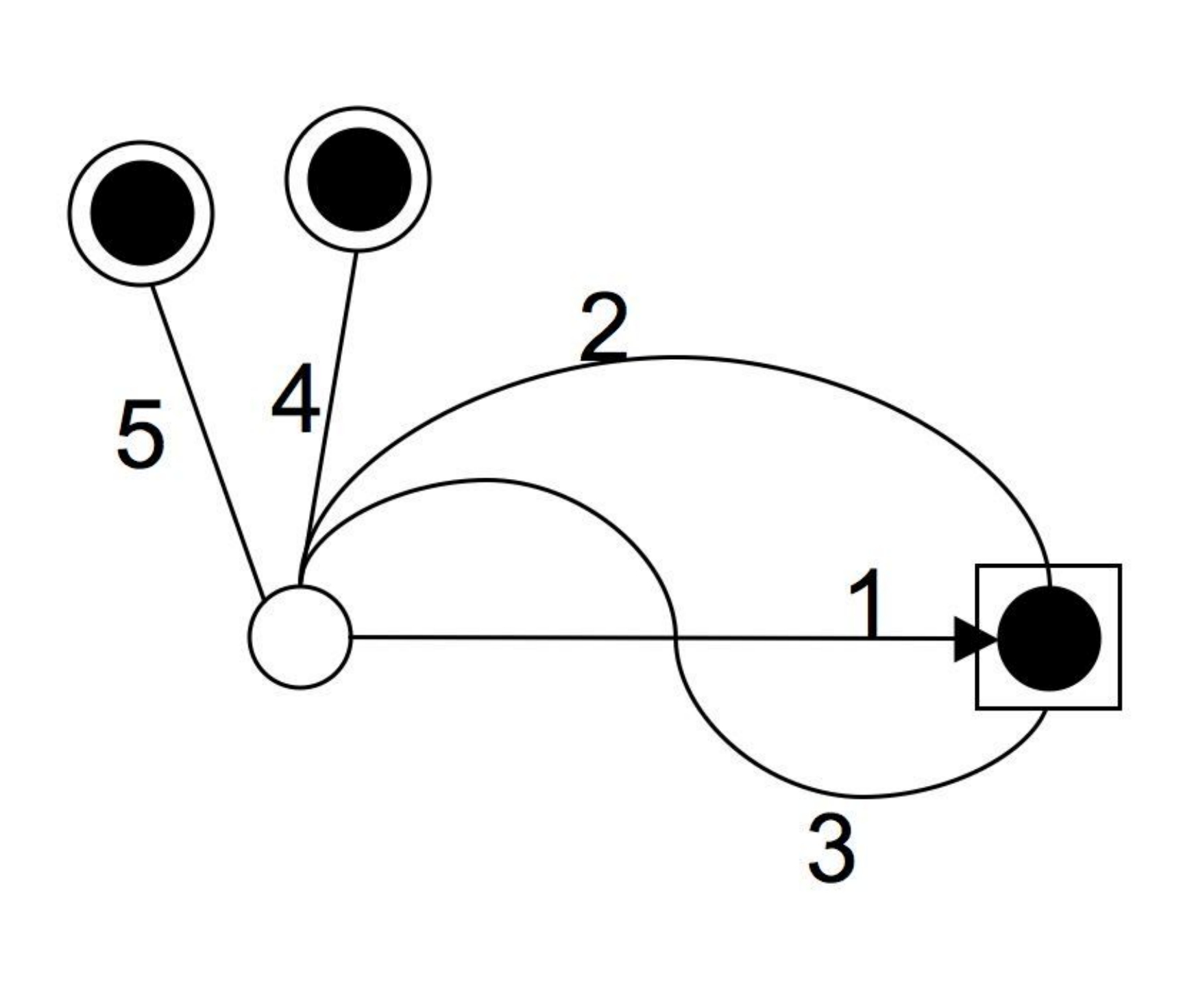}
\caption{Reconstruction of the map}\label{recons}\label{endrecons}
\end{center}
\end{figure}
\end{example}


\section{Characterisation and size of the image set $\Im(\mm)$}\label{sect:im_psi}

\subsection{A necessary and sufficient condition to belong to $\Im(\mm)$}

\subsubsection{Why $\mm$ is not surjective?}
Let us fix a permuted star thorn tree $(\tau,\sigma)$. We can try to apply to it the procedure of section \ref{sect:inj_psi} and we distinguish two cases:
\begin{itemize}
\item it can happen, for some $i<n$, when one wants to give the label $i+1$ to the edge following $\beta(i)$ (step (\ref{item:etape_determiner_ipu})), that this edge has already a label $j$ ($j<i$). If so, the procedure fails and $(\tau,\sigma)$ is not in $\Im(\mm)$.
\item if this never happens, the procedure ends with a labeled thorn tree $\overline{\tau}$. In this case, one can find the unique black-partitioned star map $M$ corresponding to $\overline{\tau}$ and by construction $\mm(M)=(\tau,\sigma)$.
\end{itemize}
For instance, take the couple $\ex{2}$ on the right of Figure \ref{fig:ex_arbre_permute}, the procedure gives successively 
\[ \alpha_1=1,\ \alpha_9=2,\ \alpha_{10}=3,\ \alpha_6=4,\ \alpha_7=5,\ \alpha_4=6,\ \alpha_5=7\]
and then we should choose $\alpha_1=8$, but this is impossible because we already have $\alpha_1=1$.
\begin{Lem}
 If the procedure fails, the label $j$ of the edge that should get a second label $i+1$ is always $1$.
\end{Lem}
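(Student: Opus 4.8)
The plan is to understand precisely why the procedure fails at some step and to trace back what the offending edge must be. When the procedure fails at index $i$, we are at step (\ref{item:etape_determiner_ipu}): we have just determined the white-extremity element carrying the symbolic label corresponding to $\beta(i)$, and the next element around the root (counter-clockwise) already bears a label $j$, whereas it should receive the label $i+1=\alpha(\beta(i))$. I would first observe that the labels are assigned to edges/thorns of white extremity in the order $\alpha_1=1, \alpha_2, \alpha_3, \ldots$ as we read the descending edges of the root from right to left (this is exactly construction rule (\ref{item:root_place})), and that step (\ref{item:etape_determiner_ipu}) moves us from the position of $\beta(i)$ to the very next position counter-clockwise. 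So the failure means this next position was labeled \emph{earlier} in the induction, i.e. it holds some value $j \le i$.

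The key idea is to compare the situation with the genuine long cycle $\alpha=(\alpha_1\,\alpha_2\,\cdots\,\alpha_N)$ that would arise if the procedure succeeded. In a successful run the white-extremity positions get labeled consecutively $\alpha_1,\alpha_2,\ldots,\alpha_N$ going counter-clockwise around the root, each exactly once, and the only position that can ``wrap around'' back to an already-labeled slot is the one whose successor is $\alpha_1=1$. I would argue that the failure is precisely a premature closing of a cycle among the white-extremity slots: when we try to write $i+1$ in a slot already carrying $j$, it means the partial sequence of white slots we have been filling has closed up into a cycle, and the element we collide with is the \emph{start} of that cycle. Since $\alpha_1=1$ is by rule (\ref{item:root_place}) the rightmost descending edge of the root and the labeling proceeds counter-clockwise from there, $1$ is the natural ``first'' slot; the collision can only occur against the beginning of the traversal, which is $\alpha_1$, forcing $j=1$.

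Concretely, I would formalize this by considering the sequence of white-extremity positions $P_1, P_2, \ldots$ visited, where $P_1$ is the position of label $1$ and $P_{k+1}$ is the position reached from $P_k$ via ``go to the black thorn matched by $\sigma$ (or the edge), read off $\beta(\,\cdot\,)$, then step counter-clockwise around the root.'' The map sending a white position to the next one is a well-defined partial function determined by $(\tau,\sigma)$, and the successful case is exactly when this function, iterated from $P_1$, visits all $N$ white positions before returning. A failure is the first repetition in this orbit; since the underlying step map is a bijection on the white-extremity slots (each slot has a unique predecessor, because $\alpha$ is built from a permutation composed with the cyclic shift), the first repeated value must be $P_1$ itself — an orbit of a permutation cannot re-enter except at its starting point. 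Therefore the slot that already has a label is the slot of $\alpha_1$, whose label is $1$, giving $j=1$.

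The main obstacle will be justifying rigorously that the ``next white position'' map is injective (equivalently, that every white-extremity slot has a unique predecessor under this procedure), since that is what forbids a collision with any $j>1$ and pins the collision to the start of the orbit. I expect this to follow from the fact that $\alpha = (1\,2\,\cdots\,N)\beta^{-1}$ and the construction of $\overline\tau$ encode $\alpha$ as a bijection: the step ``$x \mapsto \alpha(x)$'' around the root is injective by construction, so two distinct predecessors would force $\alpha$ to be non-injective, a contradiction. Once injectivity of the step map is established, the conclusion that the first collision is with $\alpha_1$, and hence $j=1$, is immediate from the elementary fact that iterating a permutation from a point first returns to that very point.
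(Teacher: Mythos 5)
There is a genuine gap, and it sits exactly at the point you yourself flagged as ``the main obstacle.'' Your first reduction is sound and coincides with the opening line of the paper's proof: since the counter-clockwise successor around the root is a bijection on the white-side positions, and each symbolic label occurs exactly once with a white extremity, a collision of the new label $i+1$ with an old label $j>1$ forces the procedure to have chosen the \emph{same} black-side element as $\beta(j-1)$ and as $\beta(i)$. But your justification for why this cannot happen --- that the step map is ``a bijection on the white-extremity slots'' because ``$\alpha=(1\,2\,\cdots\,N)\beta^{-1}$ \ldots is injective by construction'' --- is circular. The lemma must hold for an \emph{arbitrary} permuted thorn tree $(\tau,\sigma)$, which need not lie in $\Im(\Psi)$; for such a tree there is no underlying $\beta$ or $\alpha$ at all, only the reconstruction rule of step (\ref{item:etape_determiner_bi}), and that rule is history-dependent: whether $i$ is treated as a left-to-right maximum, and which element gets designated as $\beta(i)$, depends on which label values have been retrieved before time $i$. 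Consequently the sequence of white slots you call $P_1,P_2,\ldots$ is not the orbit of any fixed permutation of the slots, and the ``elementary fact that iterating a permutation first returns to its starting point'' does not apply; the unique-predecessor property is precisely what has to be proved, not a consequence of $\alpha$ being a bijection.

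The missing content is the injectivity of the dynamically defined assignment $i\mapsto\beta(i)$, and it needs an argument about the rule itself. The paper supplies it: assuming $\beta(j-1)=\beta(i)$ with $j>1$, the labels $j-1$ and $i$ sit on the same black vertex, and a two-case analysis --- according to whether $j-1$ is a left-to-right maximum there --- yields a contradiction in each case, using that at the time $i$ is processed all retrieved labels are at most $i$ and all unretrieved ones exceed $i$ (e.g.\ if $j-1$ is a left-to-right maximum, then $i$ would have to sit just right of $\beta(j-1)$ and not be a left-to-right maximum, which is impossible since everything up to and including $\beta(j-1)$ carries labels smaller than $j\le i$). Your outline stops exactly where this analysis should begin: as written, it establishes only that a collision with $j>1$ would force $\beta(j-1)=\beta(i)$, and then assumes, via an appeal to a permutation $\alpha$ that exists only when the procedure succeeds, that this equality is impossible.
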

\begin{proof}
Assume $j >1$. As the reconstruction procedure did not fail for $1\ldots i$, there are two distinct pairs of thorns with labels $i$ and $j-1$. We will prove that the reconstruction provides labels $\beta(i)$ and $\beta(j-1)$ to two distinct elements.

We assume that the labels $\beta(i)$ and $\beta(j-1)$ have been given to the same element.
In particular, $i$ and $j-1$ must belong to the same black vertex.
Let us consider the different possible cases in the reconstruction step (\ref{item:etape_determiner_bi}):
\begin{itemize}
\item If $\beta(j-1)$ is obtained {\it via} case {\it b} (the left-to-right maximum case), the label $i$ must be just to the right of $\beta(j-1)$ and not a left-to-right maximum. But this is impossible because all thorns to the left of $\beta(j-1)$ (including $\beta(j-1)$) have labels smaller than $j$.
\item If  $j-1$ is obtained {\it via} case {\it a} (the not left-to-right maximum case) and $i$ is a left-to-right maximum. The label $j-1$ is just to the right of the thorn/edge labeled by both $\beta(j-1)$ and $\beta(i)$. Then $\beta(i)$ is before the next left-to-right maximum. So the edge to the right of $\beta(i)$ has a label greater than $i$ and can not be $j-1$.
\item If  $j-1$ is obtained {\it via} case {\it a} (the not left-to-right maximum case) and $i$ is {\em not} a left-to-right maximum. The label $j-1$ is still just to the right of the thorn/edge labeled by both $\beta(j-1)$ and $\beta(i)$. Label $i$ must be as well just to the right of $\beta(i)$. It is not possible as $i$ and $j-1$ are the labels of two distinct thorns or edge since the procedure has not failed at step $i$.

\end{itemize}
Finally $\beta(i)$ and $\beta(j-1)$ correspond to two different symbolic labels and hence $i+1$ and $j$ also (they are respectively the symbolic label of the elements right at the left of $\beta(i)$ and $\beta(j-1)$ when turning around the root). Hence, the procedure can not fail for a value of $j >1$.
\end{proof}

\subsubsection{An auxiliary oriented graph}
Remark \ref{rem:leftmost_edge} gives a necessary condition for $(\tau,\sigma)$ to be in $\Im(\mm)$: its leftmost edge attached to the root must be a real edge and not a thorn.
From now on, we call this property $(P1)$: note that, among all permuted thorn trees of a given type $\lambda \vdash n$ of length $p$, exactly $p$ over $n$ have this property.
Whenever $(P1)$ is satisfied, we denote $e_0$ the left-most edge leaving the root and $\pi_0$ its black extremity.
The lemma above shows that the procedure fails if and only if $e_0$ is chosen as $\beta(i)$ for some $i<n$. But this can not happen at any time. Indeed, the following lemma is a direct consequence from step (\ref{item:etape_determiner_bi}) of the reconstruction procedure:
\begin{Lem}\label{lem:sommet_complete}
    A real edge (\textit{i.e.} which is not a thorn) $e$ can be chosen as $\beta(i)$ only if the edge and all thorns attached to the corresponding black vertex have labels smaller or equal to $i$. If this happens, we say that the black vertex is {\em completed} at step $i$.
\end{Lem}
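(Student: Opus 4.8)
My plan is to prove this statement by a direct case inspection of step (\ref{item:etape_determiner_bi}) of the inverting procedure, as the sentence introducing the lemma already announces. Recall from Remark \ref{rem:betai} that, reading the labels around a black vertex, a new cycle of $\beta$ starts exactly at each left-to-right maximum, and that step (\ref{item:etape_determiner_bi}) locates $\beta(i)$ accordingly. I would first run through the situations of that step and record, in each, whether $\beta(i)$ is a thorn or the edge $e$ of the black vertex $b$ at its end. When $i$ is not a left-to-right maximum, $\beta(i)$ is the thorn immediately to its left; when $i$ is a left-to-right maximum but some element of $b$ further along is still unlabelled, $\beta(i)$ is the thorn just before the next left-to-right maximum. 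Thus in both of these cases a \emph{thorn} is selected, and the edge $e$ can be chosen as $\beta(i)$ only in the remaining, terminal case: the one in which the labels of all thorns leaving $b$ have already been retrieved.

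From this terminal case I would read off the two halves of the conclusion. Since values are recovered in increasing order by steps (\ref{item:root_place}) and (\ref{item:etape_determiner_ipu}), at the moment label $i$ is treated the retrieved values are exactly $1,\dots,i$; hence the entry hypothesis of the terminal case, namely that every thorn of $b$ is already retrieved, says precisely that each of these thorns carries a label $\le i$. For the edge, I would use that step (\ref{item:etape_determiner_bi}) identifies $i$, in this very case, as the maximum element of the block carried by $b$. As the label of $e$ is itself one of the labels attached to $b$ (it is the value $\beta(i)$, which lies in that block), it cannot exceed the block maximum $i$. Putting the two together, every edge and thorn incident to $b$ has a label $\le i$, i.e. $b$ is completed at step $i$, which is the assertion.

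The only point that I expect to require some care is the treatment of the edge, as opposed to the thorns: the thorn bound is immediate from the entry condition of the terminal case, whereas for the edge one must be sure that its label is actually known and bounded by $i$ at this stage. I would close this gap with the same bookkeeping remark used above, namely that no value larger than $i$ has been assigned before step $i$, so that the label of $e$, equal to $\beta(i)$ and belonging to the block whose maximum is $i$, is necessarily $\le i$ and was in fact retrieved at the earlier step $\beta(i)-1<i$. With this checked, the lemma follows purely by transcribing the subcases of step (\ref{item:etape_determiner_bi}), exactly as claimed.
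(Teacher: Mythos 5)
Your proof is correct and follows exactly the route the paper intends: the lemma is stated there as a direct consequence of step (\ref{item:etape_determiner_bi}), whose case inspection shows a thorn is selected in the first two situations and the edge only in the terminal one, where all thorn values at $b$ are already retrieved and hence $\le i$. Your extra verification that the edge's own label $\beta(i)$ lies in the block with maximum $i$, so is $\le i$ and was retrieved earlier, is a point the paper leaves implicit, and you fill it in correctly.
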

\begin{Corol}
 Let $e$ be a real edge of black extremity $\pi \neq \pi_0$. Let us denote $e'$ the element (edge or thorn) immediately to the left of $e$ around the white vertex. Let $\pi'$ be the black extremity of the element $e''$ associated to $e'$ (\textit{i.e.} $e'$ itself if it is an edgeand its image by $\sigma$ otherwise). Then $\pi'$ can not be completed before $\pi$.
\end{Corol}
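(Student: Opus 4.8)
The plan is to track, within the inverting procedure of Section \ref{sect:inj_psi}, the exact step at which each black vertex becomes completed, and to exploit the local rule of step (iii), which links the label of an edge selected as some $\beta(i)$ to the label of the white element sitting immediately to its left.

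First I would recall, using Lemma \ref{lem:sommet_complete}, that a black vertex $\pi$ is completed at the step $i$ at which its edge $e$ is chosen as $\beta(i)$; at that very moment step (iii) assigns to the element of white extremity located immediately to the left of $e$ (the next one counter-clockwise around the root) the label $\alpha(\beta(i)) = i+1$. Since $\pi \neq \pi_0$ we have $e \neq e_0$, so $e$ is not the left-most element leaving the root and this left neighbour is a genuine edge or thorn; by definition it is exactly the element $e'$ of the statement, which therefore gets the label $i+1$.

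Next I would transport this label onto the vertex $\pi'$. If $e'$ is an edge, then $e'' = e'$ is the edge joining $\pi'$ to the root and it carries the label $i+1$. If $e'$ is a thorn, then its image $e'' = \sigma(e')$ is a black thorn incident to $\pi'$ bearing the same symbolic label, hence the same value $i+1$. In either case some edge or thorn leaving the black vertex $\pi'$ has label $i+1 > i$. I would then conclude by invoking Lemma \ref{lem:sommet_complete} once more: $\pi'$ can be completed at a step $j$ only once all edges and thorns leaving it have labels at most $j$; since one of them equals $i+1$, necessarily $j \geq i+1 > i$, so $\pi'$ is completed strictly after $\pi$, which is the claim.

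I expect the only delicate points to be essentially bookkeeping: fixing the counter-clockwise orientation around the root so that $e'$ really is the element receiving label $i+1$, and handling the thorn case, where the value $i+1$ must be read off the black partner $\sigma(e')$ rather than off $e'$ itself. The hypothesis $\pi \neq \pi_0$ is precisely what guarantees that $e'$ exists, i.e. that $e$ admits a left neighbour, so that no cyclic wrap-around (the sole mechanism by which the procedure can fail, as analysed just above) interferes with the argument.
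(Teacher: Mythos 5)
Your proof is correct and is in substance the same as the paper's: both arguments combine Lemma \ref{lem:sommet_complete} with the step-(\ref{item:etape_determiner_ipu}) rule that the white element immediately to the left of the one labeled $\beta(i)$ receives the label $i+1$. The only difference is that the paper argues in the contrapositive direction (if $\pi'$ is completed at step $i$, then $e''$ and hence $e'$ carry a label $j\leq i$, forcing $e$ to be $\beta(j-1)$ and $\pi$ to be completed at step $j-1<i$), which also automatically covers the degenerate case where $\pi$ is never completed; your direct version handles this implicitly as well, since $e'$, and therefore $e''$, only acquires a label at the very moment $\pi$ is completed.
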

\begin{proof}
If $\pi'$ is completed at step $i$, by Lemma \ref{lem:sommet_complete},
the element $e''$ has a label $j \leq i$.
As $e'$ has the same label, this implies that $e$ has label $\beta(j-1)$
or in other words, that $\pi$ is completed at time $j-1 < i$.
\end{proof}

When applied for every black vertex $\pi \neq \pi_0$, this corollary gives some partial information on the order in which the black vertices can be completed. We will summarize this in an oriented graph $G(\tau,\sigma)$: its vertices are the black vertices of $\tau$ and its edges are $\pi \to \pi'$, where $\pi$ and $\pi'$ are in the situation of the corollary above.
This graph has one edge attached to each of its vertices except $\pi_0$. As examples, we draw the graphs corresponding to $\ex{2}$ and to $\ex{3}$ (see Figures \ref{fig:ex_arbre_permute} and \ref{tausigma}) on Figure \ref{fig:ex_graphe_ordre_completion}.

\begin{figure}[t]
$$\includegraphics[width=2cm]{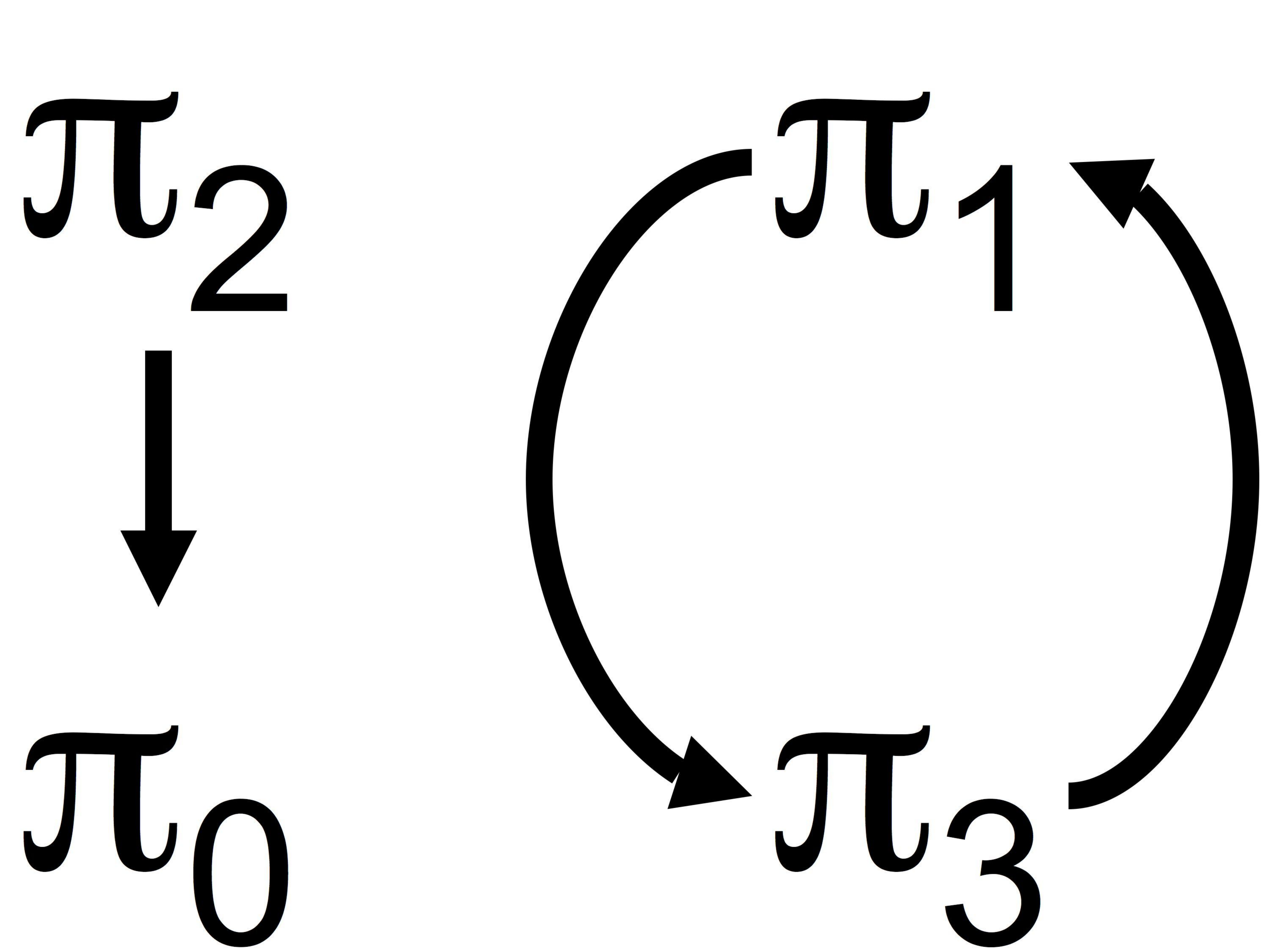} \qquad \qquad \includegraphics[width=2cm]{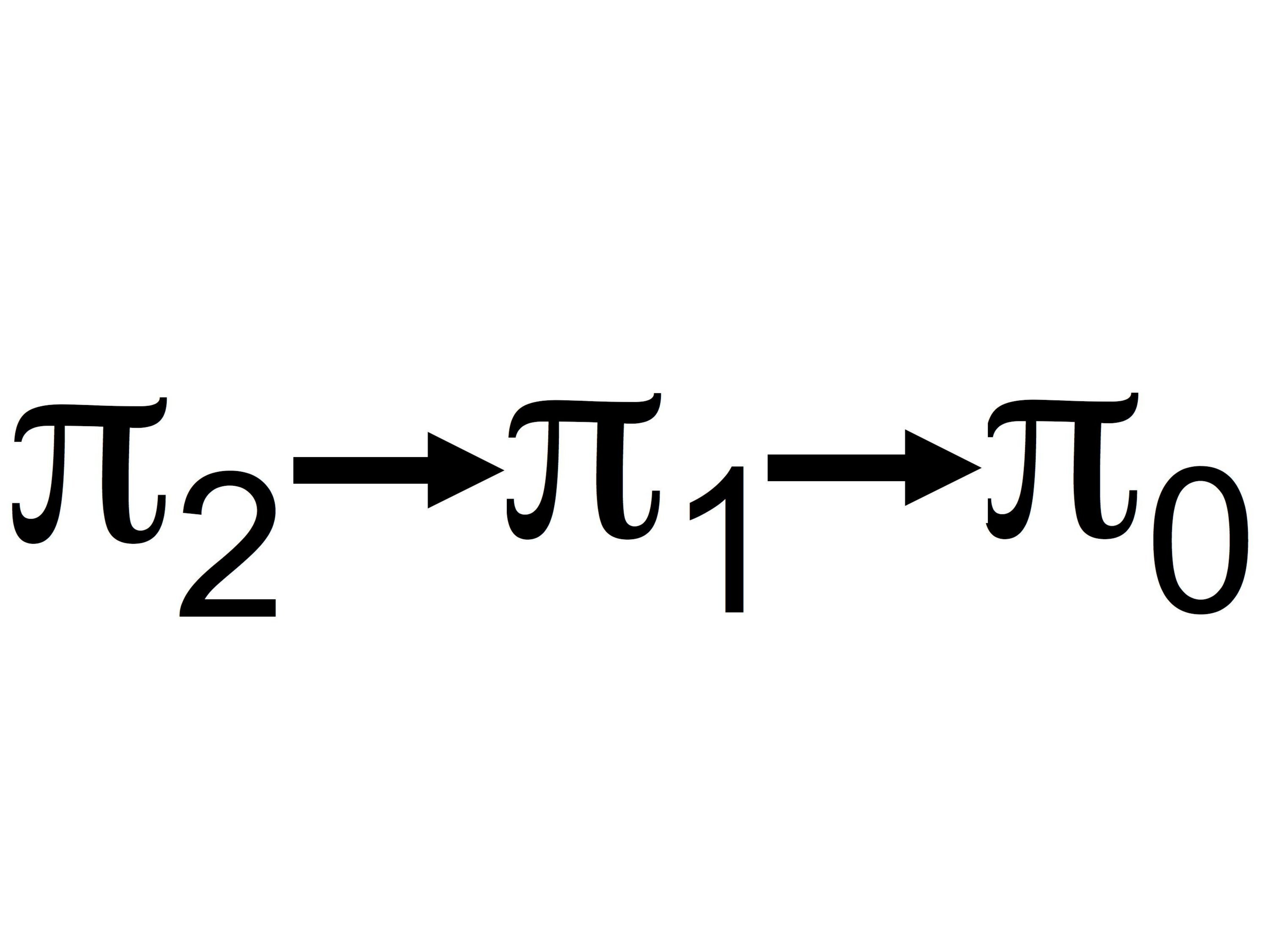}$$
\caption{Two examples of auxiliary graphs.}
\label{fig:ex_graphe_ordre_completion}
\end{figure}

\subsubsection{The graph $G(\tau,\sigma)$ gives all the information we need!}
Can we decide, using only $G(\tau,\sigma)$, whether $(\tau,\sigma)$ belongs to $\Im(\mm)$ or not? There are two cases, in which the answer is obviously yes:
\begin{enumerate}
 \item Let us suppose that $G(\tau,\sigma)$ is an oriented tree of root $\pi_0$ (all edges are oriented towards the root). In this case, we say that $(\tau,\sigma)$ has property $(P2)$. Then, the vertex $\pi_0$ can be completed only when all other vertices have been completed, \textit{i.e.} when all edges and thorns have already a label. That means that $e_0$ can be chosen as $\beta(i)$ only for $i=n$. Therefore, in this case, the procedure always succeeds and $(\tau,\sigma)$ belongs to $\Im(\mm)$. This is the case of $\ex{3}$.
 \item Let us suppose that $G(\tau,\sigma)$ contains an oriented cycle (eventually a loop). Then all the vertices of this cycle can never be completed. Therefore in this situation the procedure always fails and $(\tau,\sigma)$ does not belong to $\Im(\mm)$. This is the case of $\ex{2}$.
\end{enumerate}
In fact, we are always in one of these two cases:
\begin{Lem}
 Let $G$ be an oriented graph whose vertices have out-degree $1$, except one vertex $v_0$ which has out-degree $0$. Then $G$ is either an oriented tree with root $v_0$ or contains an oriented cycle.
\end{Lem}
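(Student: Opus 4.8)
The plan is to exploit the functional-graph structure imposed by the degree hypothesis. Since every vertex $v \neq v_0$ has out-degree exactly $1$, it possesses a unique outgoing edge, hence a unique successor; this defines a map $f$ from the set of vertices distinct from $v_0$ into the vertex set, while $v_0$ itself, having out-degree $0$, is a sink. Starting from any vertex $v$ and iterating $f$, I would follow the unique outgoing edge at each step, producing a directed walk $v \to f(v) \to f(f(v)) \to \cdots$ which can only halt upon reaching $v_0$.

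First I would use finiteness of $G$: any such walk must eventually either revisit a vertex or reach $v_0$. If it revisits a vertex, consider the segment between the first and second occurrences of that vertex. This is an oriented closed walk, and because each vertex has out-degree at most $1$ (so the successor, when it exists, is unique), the closed walk is forced to be a genuine oriented cycle, possibly a loop. In that situation $G$ contains an oriented cycle and we are in the second alternative of the statement.

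It then remains to treat the case in which no walk ever revisits a vertex, equivalently the case where $G$ has no oriented cycle. In this case the walk issued from any vertex $v$ is injective, hence finite, hence terminates; and the only vertex at which a walk can terminate is the sink $v_0$. Consequently every vertex admits a directed path to $v_0$, which in particular forces the underlying undirected graph to be connected.

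Finally I would count edges: the out-degree hypothesis yields exactly one edge per vertex other than $v_0$, that is $\lvert V\rvert - 1$ edges in total. A connected graph on $\lvert V\rvert$ vertices with $\lvert V\rvert - 1$ edges is a tree, and since each non-root vertex carries its unique edge directed along the path towards $v_0$, this tree is precisely an oriented tree rooted at $v_0$. The argument is essentially routine; the only step deserving a little care is verifying that a repeated vertex produces an honest oriented cycle rather than merely a closed walk, which here is guaranteed by the out-degree-at-most-one condition.
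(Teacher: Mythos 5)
Your proof is correct and complete; since the paper explicitly leaves this lemma to the reader, there is no proof of record to compare against, and your functional-graph argument (follow the unique successor of each vertex, extract an oriented cycle from the segment between the first and second occurrences of the first repeated vertex of a walk, and otherwise combine reachability of the sink $v_0$ from every vertex with the count of $\lvert V\rvert - 1$ edges to conclude the underlying graph is a tree oriented towards $v_0$) is exactly the routine argument the authors intend. The one hypothesis you use beyond the literal statement, finiteness of $G$, is implicit in the paper's setting, where the vertices of $G(\tau,\sigma)$ are the black vertices of a finite thorn tree, so invoking it is legitimate.
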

\begin{proof}
    We consider two different cases:
    \begin{itemize}
        \item either, there exists a vertex $v$ with no paths from $v$ to $v_0$.
            In this case, we denote $v^1,v^2,\ldots$ the vertices such that
           $v^1$ is the successor of $v$ and $v^{i+1}$ is the successor of $v^i$.
            As the number of vertices is finite,
            there are at least two indices $i_1$ and $i_2$
            such that $v^{i_1}=v^{i_2}$.
            The chain $v^{i_1}v^{i_1+1}\ldots v^{i_2}$ is an oriented loop.
        \item or there is a path from each vertex $v$ to $v_0$.
            So $G$ contains an oriented tree of root $v_0$.
            As the number of edges is exactly one less than the number of vertices,
            $G$ is an oriented tree.\qedhere
    \end{itemize}
\end{proof}

Finally, one has the following result:
\begin{Prop}
 The mapping $\mm$ defines a bijection:
\begin{equation}\label{eq:main_bijection}
\left\{\begin{array}{c} \text{black-partitioned star maps} \\
          \text{of type }\lambda
         \end{array}\right\}
\simeq
\left\{\begin{array}{c} \text{permuted star thorn trees of type }\lambda\\
        \text{with properties }(P1) \text{ and }(P2)
       \end{array}\right\}.
\end{equation}
\end{Prop}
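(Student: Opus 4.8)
The plan is to establish the claimed bijection by combining the injectivity already proved in Section \ref{sect:inj_psi} with a precise characterization of $\Im(\mm)$: a permuted star thorn tree lies in $\Im(\mm)$ if and only if it satisfies both $(P1)$ and $(P2)$. First I would check that $\mm$ respects the type. By construction the black vertices of $\tau$ are in bijection with the blocks of $\pi$, and the degree of each black vertex (its edge plus its thorns) equals the degree of the corresponding block, so $(\tau,\sigma)$ has type $\lambda$ whenever $(\beta,\pi)$ does. Thus $\mm$ genuinely maps the left-hand set into the permuted star thorn trees of type $\lambda$, and it only remains to locate the image inside this set.

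For the forward inclusion I would argue that every $(\tau,\sigma)=\mm(\beta,\pi)$ satisfies $(P1)$ and $(P2)$. Property $(P1)$ is immediate from Remark \ref{rem:leftmost_edge}: the leftmost descending edge carries the label $\alpha_N=\beta(N)$, hence is a genuine edge $e_0$ and not a thorn. For $(P2)$ I would use that the inverting procedure applied to $\mm(\beta,\pi)$ terminates successfully and recovers $(\beta,\pi)$. By the case analysis preceding the statement, if $G(\tau,\sigma)$ contained an oriented cycle the procedure would fail; since it succeeds, $G(\tau,\sigma)$ has no oriented cycle, and the structural lemma on oriented graphs then forces it to be an oriented tree rooted at $\pi_0$. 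Hence $(P2)$ holds, and $\Im(\mm)$ is contained in the trees with $(P1)$ and $(P2)$.

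For the reverse inclusion I would fix a permuted star thorn tree $(\tau,\sigma)$ of type $\lambda$ having $(P1)$ and $(P2)$ and run the inverting procedure on it. Property $(P1)$ makes $e_0$ and its black extremity $\pi_0$ well defined, and $(P2)$ places us in the first case of the image discussion: since $G(\tau,\sigma)$ is an oriented tree rooted at $\pi_0$, the vertex $\pi_0$ is completed only at the last step, so $e_0$ can be chosen as $\beta(i)$ only for $i=N$, and the conflict described in the preliminary lemma never arises. The procedure therefore runs to completion and outputs a labeled thorn tree $\overline\tau$ in which every edge and thorn around the root gets a distinct label in $\{1,\ldots,N\}$. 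Reading these off as $\alpha=(\alpha_1\ \alpha_2\ \cdots\ \alpha_N)$ exhibits $\alpha$ as a long cycle, reading the left-to-right maxima around each black vertex recovers the cycles of $\beta$, and selecting one black vertex per block yields a set partition $\pi$ coarser than the cycles of $\beta$; by Remark \ref{rem:taubar2smp} this $\overline\tau$ comes from a unique black-partitioned star map $M$, and by construction $\mm(M)=(\tau,\sigma)$. Combined with injectivity, this shows $\mm$ is a bijection onto the trees with $(P1)$ and $(P2)$.

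The step I expect to require the most care is the reverse inclusion: one must verify not merely that the procedure terminates, but that its output is a \emph{bona fide} black-partitioned star map. Concretely, the delicate points are checking that the labels produced around the root are all distinct, so that $\alpha$ is genuinely an $N$-cycle rather than a permutation with several cycles, and that the cycle-and-block structure read from $\overline\tau$ is consistent enough that re-applying $\mm$ to the reconstructed $M$ returns exactly $(\tau,\sigma)$. Both follow from the completion-order information encoded in $G(\tau,\sigma)$ together with the fact, recorded in the preliminary lemma, that the only element that could ever receive a conflicting second label is the one carrying label $1$, which under $(P2)$ is forced to step $N$.
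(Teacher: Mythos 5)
Your proposal is correct and takes essentially the same route as the paper: injectivity from Section \ref{sect:inj_psi}, property $(P1)$ via Remark \ref{rem:leftmost_edge}, the preliminary lemma that a failure of the inverting procedure can only occur at the element labeled $1$ (equivalently, when $e_0$ is chosen as $\beta(i)$ with $i<N$), and the dichotomy for the auxiliary graph $G(\tau,\sigma)$ giving exactly the two cases ($(P2)$ forces success, an oriented cycle forces failure). The verification you flag as delicate (distinct labels, hence $\alpha$ a long cycle with $\alpha\beta=(1\ 2\ \ldots\ N)$, and $\Psi(M)=(\tau,\sigma)$) is left implicit in the paper's phrase ``by construction $\Psi(M)=(\tau,\sigma)$'' but follows from the same reasoning you give, so there is no substantive divergence.
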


\subsection{Proportion of permuted thorn trees $(\tau,\sigma)$ in $\Im(\mm)$}

To finish the proof of equation \eqref{EqDBT},
one has justto compute the size of the right-hand side of \eqref{eq:main_bijection}.
We do it {\it via} a quite technical (but pretty easy) induction,
it would be nice to find a more elegant argument.

\begin{Prop}\label{prop:proportion_P2}
 Let $\lambda$ be a partition of $n$ of length $p$. Denote by $P(\lambda)$ 
 the proportion of couples $(\tau,\sigma)$ with properties $(P1)$ and $(P2)$ among all the permuted thorn trees of type $\lambda$ 
. Then, one has:
$$P(\lambda)=\frac{1}{n-p+1}.$$
\end{Prop}

\begin{proof}
 In fact, we will rather work with the proportion $P'(\lambda)$ of couples verifying $(P2)$ among the permuted thorn trees of type $\lambda$ verifying $(P1)$. As the proportion of couples with property $(P1)$ among couples $(\tau,\sigma)$ of type $\lambda$ is $\ell(\lambda)/|\lambda|$, one has: $P'(\lambda)=|\lambda|/\ell(\lambda)\cdot P(\lambda)$. We will prove by induction over $p=\ell(\lambda)$ that:
$$P'(\lambda)= \frac{|\lambda|}{\ell(\lambda) (|\lambda| - \ell(\lambda)+1)}.$$

The case $p=1$ is easy: as $G(\tau,\sigma)$ has only one vertex and no edges, it is always a tree.
Therefore, for any $n \geq 1$, one has $P'((n))=1$.\bigskip

Suppose that the result is true for any $\lambda$ of length $p-1$ and fix a partition $\mu \vdash n$ of length $p>1$. 

Let $PTT_1(\mu)$ (resp. $PTT_{1,2}(\mu)$) be the set of permuted
thorn trees $(\tau,\sigma)$ of type $\mu$, verifying $(P1)$
(resp. verifying $(P1)$ and $(P2)$).
With these notations, $P'(\mu)$ is defined as the quotient
\[ \frac{\big|PTT_{1,2}(\mu)\big|}{\big|PTT_{1}(\mu)\big|}. \]
It will be convenient to consider marked permuted thorn trees, {\it i.e.} permuted thorn trees
with a marked black vertex different from $\pi_0$.
The marked vertex will be denoted $\overline{\pi}$ and the corresponding edge
$e_{\overline{\pi}}$.
We denote $MPTT_1(\mu)$ (resp. $MPTT_{1,2}(\mu)$) the set of marked permuted
thorn trees $(\tau,\sigma)$ of type $\mu$, verifying $(P1)$
(resp. verifying $(P1)$ and $(P2)$).
To each permuted thorn tree $(\tau,\sigma)$ of type $\mu$ corresponds exactly 
$p-1$ marked permuted thorn trees, so:
\begin{align*}
\big|MPTT_\star(\mu)\big| &= (p-1) \cdot \big|PTT_\star(\mu)\big|
\text{ for $\star=1$ or $\star=1,2$},\\
\text{and thus }P'(\mu) &= \frac{\big|MPTT_{1,2}(\mu)\big|}{\big|MPTT_{1}(\mu)\big|}. 
\end{align*}

Let us now split these sets $MPTT_\star(\mu)$ depending on the degree of the marked vertex:
\[ MPTT_\star(\mu) = \bigsqcup_k MPTT_\star^k(\mu), \]
where $MPTT_\star^k(\mu)$ denote the subset of $MPTT_\star(\mu)$ of
trees with a marked vertex of degree $k$. By Lemma \ref{LemFin1} (see next paragraph), one has:
\[ \text{for all } k \geq 1,\ \big|MPTT_1^k(\mu)\big| = 
\frac{m_k(\mu)}{p} \big|MPTT_1(\mu)\big|. \] 

Let us consider an element of $MPTT_1^k(\mu)$. We distinguish two cases:
\begin{itemize}
  \item either the end of the edge leaving $\overline{\pi}$ in the graph
      $G(\tau,\sigma)$ is $\overline{\pi}$ itself.
      In this case, the graph $G(\tau,\sigma)$ contains a loop
      and the element is not in $MPTT_{1,2}^k(\mu)$.
  \item or it is another vertex of the tree.
      We call such marked permuted thorn trees \emph{good}.
      We will prove below (Lemma \ref{LemFin3}) with the induction hypothesis
      that, in this case, exactly $n-1$
      elements over $(p-1)(n-p+1)$ are in $MPTT_{1,2}^k(\mu)$.
\end{itemize}
By Lemma \ref{LemFin2}, the second case concerns exactly $n-k$ elements over $n-1$.
Therefore:
\[ | MPTT_{1,2}^k(\mu) | = \frac{n-1}{(p-1)(n-p+1)} \left( \frac{n-k}{n-1} \big| MPTT_1^k(\mu) \big| \right)\]
and we can compute $P'(\mu)$ as follows

\begin{align*}
P'(\mu) & = \frac{\big|MPTT_{1,2}(\mu)\big|}{\big|MPTT_{1}(\mu)\big|}
=\frac{\sum_k \big|MPTT_{1,2}^k(\mu)\big|}{ \big|MPTT_{1}(\mu)\big|}\\
P'(\mu) &= \frac{\sum_k \frac{n-k}{(p-1)(n-p+1)}
\big| MPTT_1^k(\mu) \big|}{\big|MPTT_{1}(\mu)\big|}\\
P'(\mu) &= \frac{\sum_k \frac{n-k}{(p-1)(n-p+1)}
\frac{m_k(\mu)}{p} \big| MPTT_1(\mu) \big|}{\big|MPTT_{1}(\mu)\big|}\\
P'(\mu) & = \frac{1}{(p-1)\big(n-p+1\big)} \cdot \left[ \frac{1}{p} \left( \sum_k n \cdot m_k(\mu) - k \cdot m_k(\mu) \right) \right] ;\\
P'(\mu) & = \frac{1}{(p-1)\big(n-p+1\big)} \frac{n\cdot p - n}{p} ;\\
P'(\mu) &
=\frac{n}{p\big(n-p+1\big)}.
\end{align*}
This computation ends the proof of Proposition \ref{prop:proportion_P2} and,
therefore, of equation \eqref{EqDBT}.
\end{proof}

\subsection{Technical lemmas}
Let $\mu$ be a partition of size $n$ and length $p$.

\begin{Lem}
\label{LemFin1}
For all $k \geq 1$,
\[ \big|MPTT_1^k(\mu)\big| = 
\frac{m_k(\mu)}{p} \big|MPTT_1(\mu)\big|. \] 
\end{Lem}
\begin{proof}
Consider the action of $S_p$ on $PTT_1(\mu)$ consisting in permuting the black vertices (with their thorns). In each orbit and hence in the whole set $PTT_1(\mu)$, the proportion of elements for which the left-most black vertex $\pi_0$ has degree $k$ is $\frac{m_k(\mu)}{p}$.
To each element in $PTT_1(\mu)$ correspond exactly $p-1$ elements in $MPTT_1(\mu)$
obtained by choosing a marked vertex $\overline{\pi}$ among the black vertices different from $\pi_0$.
Therefore the probability that $\overline{\pi}$ has degree $k$ is also $\frac{m_k(\mu)}{p}$,
which is what we wanted to prove.

Note that this is not true if we consider elements with property $(P2)$ as the action of $S_p$ does not preserve this property.
\end{proof}

We denote by $GMPTT_1^k(\mu)$ the set of good
marked permuted thorn trees $(\tau,\sigma,\overline{\pi})$ of type $\mu$, 
for which $\overline{\pi}$ is a vertex of degree $k$.

\begin{Lem}
\label{LemFin2}
\[\frac{|GMPTT_1^k(\mu)|}{|MPTT_1^k(\mu)|} = \frac{n-k}{n-1}. \]
\end{Lem}
\begin{proof}
    Consider the action of $S_{n-1}$ on $MPTT_1^k(\mu)$ consisting in changing
    the cyclic order of the edges and thorns incident to the root without moving
    the left-most edge.
    In each orbit of this action, the edge or thorn $e'$ just after
    $e_{\overline{\pi}}$ is uniformly distributed among the $n-1$ edges and
    thorns incident to the root and different from $e_{\overline{\pi}}$.
    Among these edges and thorns, there are $k-1$ thorns which are associated
    by $\sigma$ to a thorn incident to the black vertex $\overline{\pi}$.
    By definition, an element in $MPTT_1^k(\mu)$ is good if and only if $e'$ is not one of
    these thorns, therefore, in each orbit, the proportion of good elements
    is $\frac{n-k}{n-1}$.
\end{proof}

Recall that any marked permuted thorn tree verifying property $(P2)$
is good. In other terms, $MPTT_{1,2}^k(\mu)$ is a subset of $GMPTT_1^k(\mu)$.
\begin{Lem}
\label{LemFin3}

We assume that, for $\mu'$ of size $n-1$ and length $p-1$,
the proportion of permuted star thorn trees of type $\mu'$ verifying $(P2)$
among those which verify $(P1)$ does not depend on $\mu'$.
We denote this proportion $P'_{n-1,p-1}$.
Then one has:
\[\frac{|MPTT_{1,2}^k(\mu)|}{|GMPTT_1^k(\mu)|} = P'_{n-1,p-1}.\]
\end{Lem}

\begin{proof}
    Consider the following application
    \[ \varphi_{\mu,k} : \begin{array}{rcl}
        GMPTT_1^k(\mu) &\longrightarrow& \left\{ \begin{tabular}{c}
            permuted star thorn trees with \\
            $\ell(\mu)-1$ black vertices and $n- \ell(\mu)$ thorns
        \end{tabular} \right\} \\
        (\tau,\sigma,\overline{\pi}) &\longmapsto& (\tau',\sigma'),
    \end{array}\]
    where $(\tau',\sigma')$ is obtained as follows.
    Consider the edge or thorn immediately to the left of $e_{\overline{\pi}}$ and
    denote $\pi'$ the black extremity of the element with the same symbolic label.
    Then, starting from $(\tau,\sigma,\overline{\pi})$,
    erase the marked black vertex $\overline{\pi}$ with its edge $e_{\overline{\pi}}$
    and move its thorns to the black vertex $\pi'$
    (at the right of its own thorns). For example,
    \[ \varphi_{\mu,k} \left( \begin{array}{c}
       \includegraphics[width=35mm]{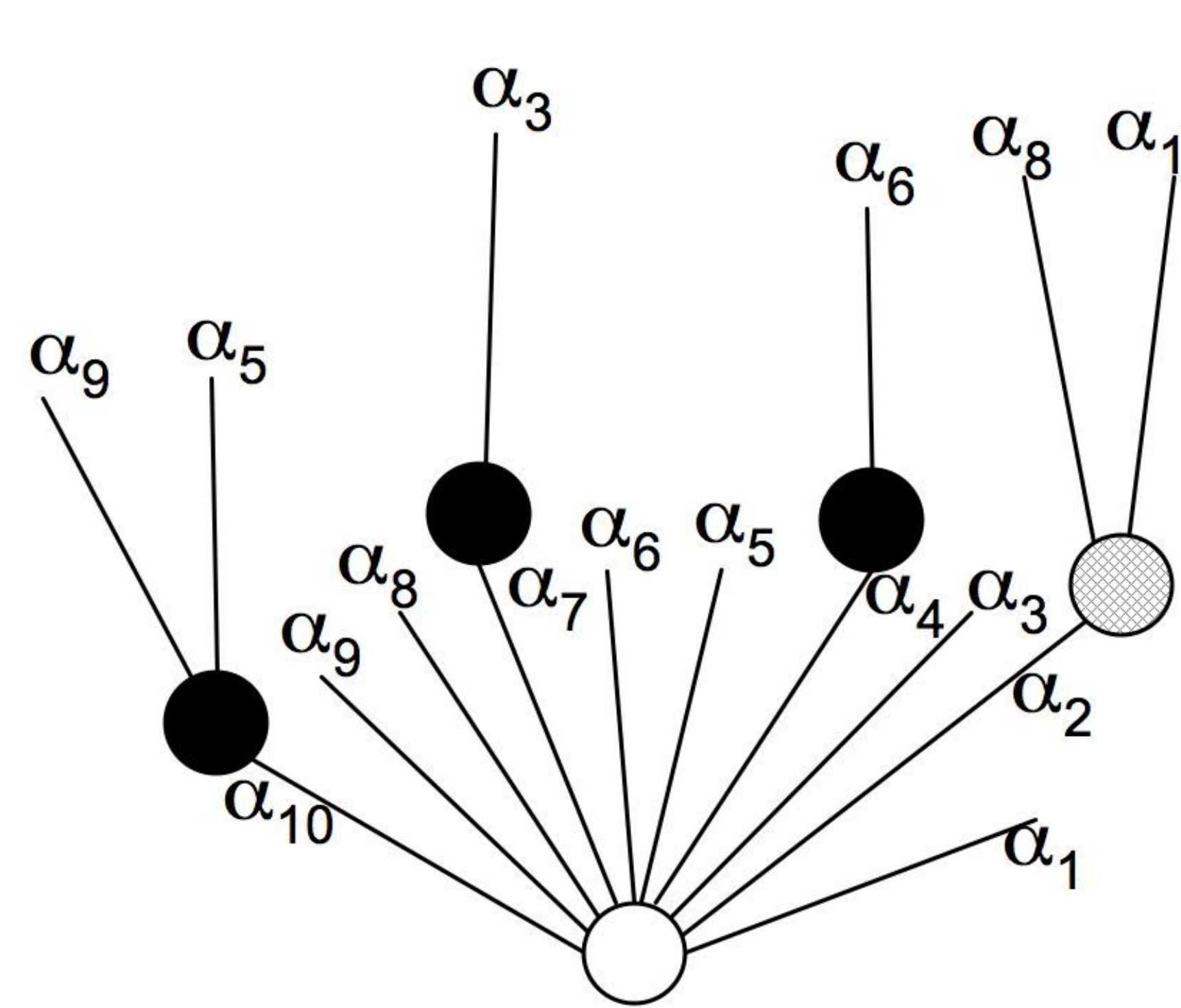}\end{array} \right) =
       \begin{array}{c}\includegraphics[width=35mm]{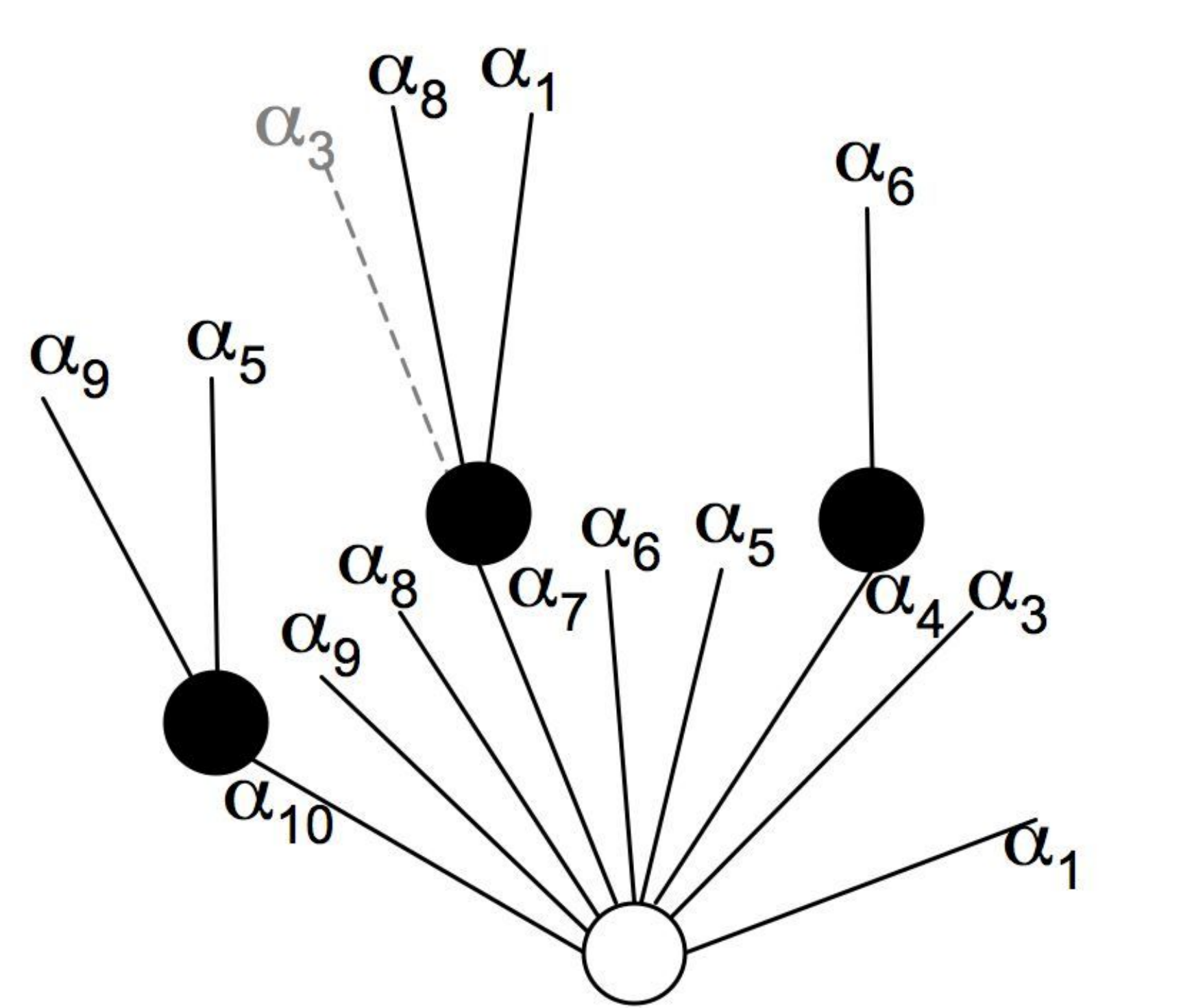} \end{array}.
            \]
    This application has nice properties:
    \begin{itemize}
        \item it preserves property $(P2)$.
            Indeed, if $(\tau',\sigma') = \varphi(\tau,\sigma,\overline{\pi})$,
            then $G_{\tau',\sigma'}$ is obtained form $G_{\tau,\sigma}$ by 
            contracting its edge attached to the vertex $\overline{\pi}$.
        \item the number of preimages of a given permuted star thorn tree
            $(\tau',\sigma')$ depends only on its type $\lambda$. Indeed, there are no preimages if
            $\lambda$ is not of the form
            $\mu \backslash (j,k) \cup (j+k-1)$
            for some $j$
            (from now on, we use the notation $\mu^{\downarrow (j,k)} = \mu \backslash (j,k) \cup (j+k-1)$).
            Otherwise, the preimages are obtained as follows: choose a vertex $v$
            of $\tau'$ of degree $j+k-1$ (there are $m_{j+k-1}(\lambda)$ possible choices),
            choose the edge or a thorn of white
            extremity associated to one of its $j-1$ left-most thorns
            ($j$ choices per vertex $v$), add
            a new black vertex just at the right of this element and attach the
            $k-1$ last thorns of $v$ to this new vertex.
            With this description, it is clear that 
            the cardinality of preimage is $j m_{j+k-1}(\lambda)$.
    \end{itemize}
    Recall that we assumed the number $P'_{n-1,p-1}$ ( dependent only on $n$ and $p$, but not on $\lambda$) to be the proportion of permuted star thorn trees of type $\lambda$ verifying $(P2)$ among those which verify $(P1)$.
    With the two above properties, we can compute the proportion of elements
    verifying $(P2)$ in $GMPTT_1^k(\mu)$. Indeed,
    \begin{multline*}
    |MPTT_{1,2}^k(\mu)|
    =\sum_{j \geq 1 \atop \lambda=\mu^{\downarrow (j,k)}}
    j m_{j+k-1}(\lambda) |MPTT_{1,2}^k(\lambda)| \\
    =  \sum_{j \geq 1 \atop \lambda=\mu^{\downarrow (j,k)}}
    j m_{j+k-1}(\lambda) P'_{n-1,p-1} |MPTT_{1}^k(\lambda)| 
        = P'_{n-1,p-1} |GMPTT_{1}^k(\mu)|,
    \end{multline*}
    which is exactly what we wanted to prove.
\end{proof}
\section{Link between Theorems \ref{th:reformulationfine} and \ref{th:Stanleyfine}}\label{sect:reformulation}
The goal of this section is to prove the equivalence between
Theorem \ref{th:reformulationfine} and Theorem \ref{th:Stanleyfine}.
This will be done using differential calculus in the symmetric function ring :
we present this algebra in paragraph \ref{SubsectSymmetricFunctions}.
Then, in paragraph \ref{subsect:LinkOGF}, we explain how the
generating series of black-partitioned maps and maps are related.
Finally, after a small lemma on thorn trees (paragraph \ref{subsect:lemma_thorn_trees}),
we use all these tools to prove the equivalence of
Theorems \ref{th:reformulationfine} and \ref{th:Stanleyfine} in paragraph \ref{SubsectEquivalence}.

\subsection{Symmetric functions}\label{SubsectSymmetricFunctions}
Let us begin by some definitions and notations on symmetric functions. As much as possible we use the notations
of I.G. Macdonald's book \cite{Macdo}.

We consider the ring $\Lambda_n$ of symmetric polynomials in $n$ variables
$x_1,\dots,x_n$.
The sequence $(\Lambda_n)_{n \geq 1}$ admits a projective limit $\Lambda$,
called \emph{ring of symmetric functions}.
This ring has several classical linear bases indexed by partitions.
\begin{itemize}
    \item \emph{monomial symmetric functions}:
        for monomials we use  the short notation
        $\xx^\vv=x_1^{v_1} x_2^{v_2} \dots$.
        Then, we define        
        \[\mon_\lambda = \sum_{\vv} \xx^\vv\]
        where the sum runs over all vectors $\vv$ which are permutations of $\lambda$
        (without multiplicities).

        \noindent {\it Remark.} We use upper case $M$ for the monomial symmetric
        functions instead of the usual lower case $m$ because a lot of formulae 
        in this paper involve
        multiplicities $m_i(\lambda)$ of some parts and monomial symmetric functions
        at the same time.
    \item \emph{power sum symmetric functions}:
       by definition 
        \[p_0 =1,\qquad p_k =\sum_{i \geq 1} x_i^k,
        \qquad p_\mu =\prod_{j=1}^{\ell(\mu)} p_{\mu_j}.\]
\end{itemize}

Besides, we consider the differential operator $\Delta_n : \Lambda_n \to \Lambda_n$ given by:
\[ \text{for all }f \in \Lambda_n, \Delta_n ( f ) = \sum_{i=1}^n x_i^2 \frac{\partial f}{\partial x_i}. \]

Let us compute the image by this operator of the symmetric polynomials
$\mon_\lambda(x_1,\dots,x_n)$ and $p_\mu(x_1,\dots,x_n)$.
If $\ell(\lambda) \leq n$, we denote $S_n(\lambda)$ the set (without multiplicities)
of all vectors obtained by a permutation of the vector 
$(\lambda_1,\dots,\lambda_{\ell(\lambda)},0,\dots,0)$ of size $n$.
\begin{align*}
 \Delta_n\big(\mon_\lambda(x_1,\dots,x_n)\big) &=
 \sum_{\vv \in S_n(\lambda)} \sum_{i=1}^n x_i^2 \frac{\partial \xx^\vv}{\partial x_i},\\
 &=  \sum_{\vv \in S_n(\lambda)} \sum_{i=1}^n  v_i \xx^{\vv + \delta_i},
\end{align*}
where $\delta_i$ is the vector of length $n$, whose components are all equal to $0$,
except for its $i$-th component which is equal to $1$. It is clear that, if $\vv$ is a permutation
of a partition $\lambda$, then $\vv + \delta_i$ is a permutation of some
$\mu= \lambda^{\uparrow (j)}$ for $j=v_i$.

We will group together terms with the same exponent.
So the question is: given a vector $\vv'$, which is a
permutation of $\mu$, in how many ways can it be written as $\vv + \delta_i$ with
$\vv \in S_n(\lambda)$ and $1\leq i \leq n$? The vector $\vv' - \delta_i$ is a permutation
of $\mu^{\downarrow (v'_i)}$, which is equal to $\lambda$ if and only if $v'_i=j+1$. Therefore,
there are $m_{j+1}(\mu)$ ways to write $\vv'$ under this form. Finally,
\begin{align*}
 \Delta_n\big(\mon_\lambda(x_1,\dots,x_n)\big) 
 &= \sum_{j>0 \atop \mu=\lambda^{\uparrow (j)}}  \sum_{\vv' \in S_n(\mu)} j \cdot m_{j+1}(\mu) \xx^{\vv'}\\
&= \sum_{j>0 \atop \mu=\lambda^{\uparrow (j)}} j \cdot  m_{j+1}(\mu)\ \mon_\mu(x_1,\dots,x_n).
\end{align*}
As the coefficients in this formula do not depend on $n$,
one can define the limit of the operators $\Delta_n$
as the operator $\Delta : \Lambda \to \Lambda$ which sends $\mon_\lambda$ to 
\begin{equation}\label{EqDeltaMon}
\Delta(\mon_\lambda) = \sum_{j>0 \atop \mu=\lambda^{\uparrow (j)}} j \cdot m_{j+1}(\mu)\ \mon_\mu.
\end{equation}
It is the limit of the sequence $(\Delta_n)_{n \geq 1}$ in the sense that:
\[ \text{for all } F \in \Lambda,\ (\Delta F)(x_1,\dots,x_n) = \Delta_n \big( F(x_1,\dots,x_n) \big).\]
Note that it was not obvious before the computation that
the sequence of ope\-rators $\Delta_n$ had a limit.
For instance, the sequence of operators  $\Delta'_n$ defined by
 $\Delta'_n(f)=\sum_{i=1}^n \frac{\partial f}{\partial x_i} $ does not
have a limit because $\Delta'_n \big( \mon_{(1)} (x_1,\dots,x_n) \big)= n$
does not have a limit in $\Lambda$.

Let us now come to the image of power sums.
For one part partition, one has, for $k \geq 1$:
\[
\Delta_n \big( p_k(x_1,\dots,x_n) \big)
= \sum_{1 \leq i,j \leq n} x_i^2 \frac{\partial x_j^k}{\partial x_i}
= \sum_{1 \leq i \leq n} k \cdot x_i^{k+1} = k \cdot p_{k+1} (x_1,\dots,x_n).
\]
The result still holds for $k=0$. Using the fact that  $\Delta_n$ is a derivation,
one obtains immediately the formula for general power sums:
\begin{align*}
\Delta_n \big( p_\lambda(x_1,\dots,x_n) \big) & = \sum_j \left( \lambda_j \cdot  p_{\lambda_j+1}(x_1,\dots,x_n) \cdot \prod_{\ell \neq j} p_\ell \right) \\
& = \sum_i i \cdot m_i(\lambda)\ p_{\lambda^{\uparrow (i)}}(x_1,\dots,x_n).
\end{align*}
One can take the limit of the previous equation and we get:
\begin{equation}\label{EqDeltaPower}
 \Delta(p_\lambda) = \sum_i i \cdot m_i(\lambda)\ p_{\lambda^{\uparrow (i)}}.
\end{equation}

\subsection{Generating series of maps and partitioned maps}\label{subsect:LinkOGF}

Recall that $A(\lambda)$, $B(\lambda)$, $C(\lambda)$ and $D(\lambda)$ count the numbers
of (star) (partitioned) rooted unicellular bipartitite maps of type $\lambda$,
according to the following table.

\begin{tabular}{c|c|c}
    & \begin{tabular}{c} maps without \\ additional structure \end{tabular} 
        & \begin{tabular}{c} partitioned \\ maps \end{tabular} \\ \hline
            \begin{tabular}{c} no conditions \\ on white vertices \end{tabular}
                & $A(\lambda)$ & $C(\lambda)$  \\ \hline
                \begin{tabular}{c} only one \\ white vertex \end{tabular}
                    & $B(\lambda)$ & $D(\lambda)$
\end{tabular}\medskip

Quantities $A$ and $C$ (resp. $B$ and $D$) are linked by the following lemma:
\begin{Lem}\label{LemLinkOGF}
\begin{align}
\label{eq:C2A} \sum_{\mu \vdash n+1} C(\pnuv\mu) \Aut(\mu) \mon_\mu & = \sum_{\nu \vdash n+1} A(\nu) p_\nu;\\
\label{eq:D2B} \sum_{\lambda \vdash n} D(\pnv\lambda) \Aut(\lambda) \mon_\lambda & = \sum_{\pi \vdash n} B(\pnv\pi) p_\pi,
\end{align}
where $\Aut(\mu)$ is the numerical factor $\prod_i m_i(\mu)!$ by definition.
\end{Lem}
\begin{proof}

The proof is similar to the one of \cite[Proposition 1]{MoralesVassilieva:factorizations_long_cycle}. We note $\overline{R}_{\epsilon,\rho}$ the number of ways to {\it coarse} an integer partition $\epsilon \vdash n$ to get an integer partition $\rho$, i.e. the number of unordered set partitions $\{P^{1}, \ldots, P^{\ell(\rho)}\}$ of $[\ell(\epsilon)]$ such that $\rho_j = \sum_{i\in P^j} \epsilon_i$. We have the classical relation:  $p_{\epsilon} = \sum_{\rho} Aut(\rho) \overline{R}_{\epsilon,\rho} \mon_{\rho}$.\\
 Furthermore by definition of partitioned maps, $C(\mu) = \sum_{\nu}\overline{R}_{\nu,\mu}A(\nu)$ (resp. $D(\lambda) = \sum_{\pi}\overline{R}_{\pi,\lambda}B(\pi)$). Combining these expressions yields the desired result.\end{proof}

 \subsection{An easy lemma on permuted thorn trees}\label{subsect:lemma_thorn_trees}
 Consider integers $n,i \geq 1$ and two partitions $\lambda \vdash n$,
 $mu \vdash n+1$ with $\mu=\lambda^{\uparrow (i)}$.

It is easy to transform a permuted thorn tree $(\tau,\sigma)$ where $\tau$ has type $\lambda \vdash n$ into a permuted thorn tree $(\tau',\sigma')$ where $\tau'$ has type $\mu$.
We just add a thorn anywhere on the white vertex ($n+1$ possible places) and a thorn anywhere on a black vertex of degree $i$ (there are $i$ possible places on each of the $m_i(\lambda)$ black vertices of degree $i$). Then we choose $\sigma'$ to be the extension of $\sigma$ associating the two new thorns. This procedure is invertible if we remember which thorn of black extremity is the new one (it must be on a black vertex of degree $i+1$, so there are $i \cdot m_{i+1}(\mu)$ choices). This leads immediately to the following relation:
\begin{equation}\label{eq:rec_BT}
\BT(\pnuv\mu) \cdot (n+1-p)! \cdot i \cdot m_{i+1}(\mu) = (n+1) \cdot i \cdot m_i(\lambda) \cdot \BT(\pnv\lambda) \cdot (n-p)!.
\end{equation}
If we fix a partition $\mu \vdash n+1$ of length $p<n+1$ and
 sum equation \eqref{eq:rec_BT} over partitions $\lambda$ that write
 as $\mu^{\downarrow (i+1)}$ for some $i$, we get:
\begin{equation}\label{EqSumRecBT}
\BT(\pnuv\mu) \cdot (n+1-p)! \cdot (n+1-p) = (n+1) \sum_{\lambda = \mu^{\downarrow (i+1)}, i>0} i \cdot m_i(\lambda) \cdot \BT(\pnv\lambda) \cdot (n-p)!.
\end{equation}

\subsection{Counting partitioned or not partitioned maps are equivalent} \label{SubsectEquivalence}

We have now all the tools to prove the equivalence of Theorems
\ref{th:reformulationfine} and \ref{th:Stanleyfine}.

\begin{proof}
Let us first assume that Theorem \ref{th:reformulationfine},
and hence equation \eqref{EqDBT}, is true.

We start from equation \eqref{EqSumRecBT} and use equations \eqref{eq:bij_MV}
and \eqref{EqDBT} respectively in the left and right-hand sides: for any $\mu \vdash n+1$,
\begin{align}
C(\pnuv\mu) \cdot (n+1-p) & = (n+1) \sum_{\lambda = \mu^{\downarrow (i+1)}, i>0} i \cdot m_i(\lambda) \cdot D(\pnv\lambda) \cdot (n+1-p).\label{EqProofEquiv1}\\
\intertext{We multiply both sides by $\Aut(\mu) \mon_\mu$ and sum this equality on all partitions $\mu$ of $n+1$, except $1^{n+1}$.}
\sum_{\mu \vdash n+1 \atop \mu \neq 1^{(n+1)}} C(\pnuv\mu) \Aut(\mu) \mon_\mu & = (n+1) \sum_{\mu \vdash n+1 \atop \mu \neq 1^{(n+1)}} \sum_{i>0 \atop \lambda = \mu^{\downarrow (i+1)}} i \cdot m_i(\lambda) \Aut(\mu) D(\pnv\lambda) \mon_\mu  \label{EqThReformulationMSeries}\\
& = (n+1) \sum_{\lambda \vdash n} \Aut(\lambda) D(\pnv\lambda) \left(\sum_{i>0 \atop \mu = \lambda^{\uparrow (i)}} i \cdot m_{i+1}(\mu) \mon_\mu \right). \nonumber
\end{align}
The last equality has been obtained by changing the order of summation and using the trivial fact that, if $\mu = \lambda^{\uparrow (i)}$, one has $\Aut(\mu) \cdot m_i(\lambda)= \Aut(\lambda) \cdot m_{i+1}(\mu)$. Now, observing that the expression in the brackets is exactly the right hand-side of equation \eqref{EqDeltaMon}, one has:
$$\sum_{\mu \vdash n+1} C(\pnuv\mu) \Aut(\mu) \mon_\mu - (n+1)! M_{1^{n+1}} = (n+1) \cdot \Delta \left( \sum_{\lambda \vdash n} \Aut(\lambda) D(\pnv\lambda) \mon_\lambda \right).$$
Let us rewrite this equality in the power sum basis. The expansion of the two summations in this basis are given by equations \eqref{eq:C2A} and \eqref{eq:D2B}. We also need the power sum expansion of $(n+1)! M_{1^{n+1}}$, which is (see \cite[Chapter I, equation (2.14')]{Macdo}):
\begin{multline*}
(n+1)! M_{1^{n+1}} = (n+1)! \sum_{\nu \vdash n+1} \frac{(-1)^{n+1-\ell(\nu)}}{z_\nu} p_\nu
 = \sum_{\nu \vdash n+1} A(\nu) (-1)^{n+1-\ell(\nu)} p_\nu.
\end{multline*}
Putting everything together, we get:
\begin{multline}\label{EqThMainPSeries}
\sum_{\nu \vdash n+1} A(\nu) p_\nu + \sum_{\nu \vdash n+1} A(\nu) (-1)^{n-\ell(\nu)} p_\nu = (n+1) \sum_{\pi \vdash n} B(\pnv\pi) \Delta(p_\pi)\\
= (n+1) \sum_{\pi \vdash n} B(\pnv\pi) \sum_i i \cdot m_i(\pi)\ p_{\pi^{\uparrow (i)}}.
\end{multline}
The last equality comes from equation \eqref{EqDeltaPower}.
Identifying the coefficients of $p_\mu$ in both sides, we obtain exactly Theorem \ref{th:Stanleyfine}.
\bigskip

Conversely, let us suppose that Theorem \ref{th:Stanleyfine} is true.
This means that, for every partition $\mu \vdash n+1$, one has:
\[A(\nu) + (-1)^{n-\ell(\nu)} A(\nu) = (n+1) 
\sum_{\pi = \nu^{\downarrow (i+1)}, i>0}  i\ m_i(\pi)\ B(\pnv\pi).\]
Multiplying by $p_\mu$ and summing over all partitions $\mu$ of $n+1$,
we obtain equation \eqref{EqThMainPSeries}.
With the same computations as before we can deduce equation 
\eqref{EqThReformulationMSeries} from it .
Identifying the coefficients of $M_\mu$ , we get equation \eqref{EqProofEquiv1}.
But, using equations \eqref{eq:bij_MV} and \eqref{EqSumRecBT}, one has:
\begin{multline*}
C(\pnuv\mu) \cdot (n+1-p) = \BT(\pnuv\mu) \cdot (n+1-p)! \cdot (n+1-p)\\
 = (n+1) \sum_{\lambda = \mu^{\downarrow (i+1)}, i>0} i \cdot m_i(\lambda) \cdot \BT(\pnv\lambda) \cdot (n-p)!.
\end{multline*}
Therefore, for every $\mu \vdash n+1$,
\[ \sum_{\lambda = \mu^{\downarrow (i+1)}, i>0}
i \cdot m_i(\lambda) \cdot \BT(\pnv\lambda) \cdot (n-p)!
= \sum_{\lambda = \mu^{\downarrow (i+1)}, i>0}
i \cdot m_i(\lambda) \cdot D(\pnv\lambda) \cdot (n+1-p).\]
Using Remark \ref{rem:syst_inv} implies that for any $\lambda \vdash n$
\[\BT(\pnv\lambda) \cdot (n-p)! = D(\pnv\lambda) \cdot (n+1-p),\]
because both sides are solutions of the same sparse triangular system.
This corresponds to equation \eqref{EqDBT}, one of the equivalent forms of
Theorem~\ref{th:reformulationfine}.\end{proof}

\begin{Rem}
    Using the same kind of arguments, one could also prove that
    Theorem \ref{th:stanley} and Theorem~\ref{th:reformulation} are
    equivalent.
\end{Rem}


\section*{Acknowledgements}
The second author acknowledges the support of ERC under the agreement  "ERC StG 208471 - ExploreMaps".


\bibliographystyle{alpha}
\bibliography{product_of_long_cycles}

\end{document}